\newtheorem{theorem}{Theorem}
\newtheorem{lemma}[theorem]{Lemma}
\newtheorem{prop}[theorem]{Proposition}
\newtheorem{corollary}[theorem]{Corollary}
\theoremstyle{definition}
\theoremstyle{remark}
\newtheorem{remark}[theorem]{Remark}
\newcommand{\DD}{{\mathbb D}}
\newcommand{\EE}{{\mathbb E}}
\newcommand{\GG}{{\mathbb G}}
\newcommand{\rR}{{\mathcal R}}
\newcommand{\CC}{{\mathbb C}}
\newcommand{\TT}{{\mathbb T}}
\newcommand{\la}{\lambda}
\DeclareMathOperator{\Aut}{Aut}
\DeclareMathOperator{\tr}{tr}
\renewcommand{\phi}{\varphi}
\subjclass[2000]{30E05, 93B36, 32F45}
\begin{document}

\title[SNP problem vs. weak extremals]{Weak extremals in the symmetrized bidisc}

\address{Laval Univerisity, Quebec, Canada / Jagiellonian University, Krakow, Poland}
\author{\L ukasz Kosi\'nski}\email{lukasz.kosinski@gazeta.pl}


\keywords{(Weak) extremals, symmetrized bidisc, classical Cartan domains}

\maketitle

\begin{abstract}
The main goal of the paper is to study $m$-extremal mappings in the symmetrized bidisc showing that they are rational and $\GG_2$-inner which, in particular, answers a question posed in \cite{Agl-Lyk-You 2013}.
\end{abstract}

\section{Preliminaries}
\subsection{Introduction}
Over 15 years ago N.~Young and J.~Agler in a sequence of papers (see for example \cite{You} and references contained there) devised a new approach to the spectral Nevanlinna-Pick problem. The crucial role was played by the special domain, the so-called symmetrized bidisc. It is a bounded subdomain of $\CC^2$ denoted by $\GG_2$ and given by the formula
$$
\GG_2=\{(s,p):\ |s-\bar s p| + |p|^2<1\}.
$$
The main idea of Agler and Young was to show that if $x_j$ are cyclic, $j=1,\ldots,m$, then the $2\times 2$ spectral Nevanlinna-Pick problem has a solution if and only if there exists a solution to a proper Pick problem in $\GG_2$.

There is a natural link between the spectral Nevanlinna-Pick problem and so-called weak $m$-extremal mappings defined in our previous paper \cite{Kos-Zwo 2014} (we shall give a definition in the second section), see Remark~\ref{uzupelnic}. In principle, the notion of a weak $m$-extremal mappings is similar to the notion of an $m$-extremal mapping introduced in recent papers of J.~Agler, Z.~Lykova and N.~Young. The main difference, beyond a strong connection of weak $m$-extremal mappings with the SNP problem, lies in the fact that in some sense weak $m$-extremal mappings always do exist (see Subsection~\ref{weak} for details). This is not the case for $m$-extremal mappings - a priori we have no guarantee that considered objects exist.

There is a correspondence between weak $m$-extremals in the symmetrized bidisc and $m$-extremals classical Cartan domain  of the first type (recall here that we have shown in \cite{Kos-Zwo 2014} that the weak $m$-extremals and $m$-extremals coincide in a class of domains containing for example the Cartan domains), i.e. the unit ball in the space of $2\times 2$ complex matrices equipped with the operator norm which we shall denote in the sequel by $\rR_I$. This turns out to be very useful as the Cartan domain has a very nice geometry - it is balanced, homogenous. In particular, Schur's algorithm may be applied to this domain.

Using this correspondence we shall show in Theorem~\ref{thrational} that weak $m$-extremal mappings in the symmetrized bidisc are rational and $\GG_2$-inner (for the definition see Subsection~\ref{additional}) extending the result obtained in \cite{Kos-Zwo 2014}. This result applied to $m$-extremals solves a conjecture posed in \cite{Agl-Lyk-You 2013}. Moreover, our method allows us to estimate the degree of weak $m$-extremals.


\bigskip

Here and throughout the paper $\DD$ denotes the unit disc in the complex plane. $\mathcal O(D,G)$ is the space of holomorphic mappings between a domain $D$ and a set $G$. We shall shortly write $\mathcal O(D)$ for $\mathcal O(D, \CC)$. Moreover, $\mathcal O(\bar D, G)$ denotes the space of holomorphic mappings in a neighborhood of $\bar D$ with values in $G$. For a matrix $a$ let $a^\tau$ be a matrix obtained after a permutation of columns of $a$. The transposition of $a$ is denoted by $a^t$. $\partial_s D$ stands for the Shilov boundary of a bounded domain of $\CC^n$. Finally $f^*(\zeta)$ is the non-tangential limit (provided that it exists) of a bounded holomorphic function $f$ at a point $\zeta\in\TT$.

\subsection{Weak $m$-extremal mappings}\label{weak}

Let $D$ be a domain in $\CC^n$. Take pairwise distinct points $\lambda_1,\ldots , \la_m\in \DD$ and $z_1,\ldots, z_m\in D$, $m\geq 2$. Following \cite{Agl-Lyk-You 2013} we say that the interpolation data
\begin{equation}\lambda_j\mapsto z_j,\quad \DD\to D\end{equation}
is \emph{extremally solvable} if there is an analytic disc $h$ in $D$ such that $h(\la_j)=z_j$ for $j=1,\ldots,m$ and there is no $f\in \mathcal O(\bar \DD, D)$ such that $f(\la_j)=z_j$, $j=1,\ldots, m.$

We shall say that an analytic disc $h:\DD\to D$ is a \emph{weak $m$-extremal with respect to pairwise distinct points $\lambda_j$ in $\DD$, $j=1,\ldots, m$,} if the interpolation data $\lambda_j\mapsto h(\la_j)$ is extremally solvable. Naturally, an analytic disc is called to be a \emph{weak $m$-extremal} (or shortly a \emph{weak extremal}) if it is a weak $m$-extremal with respect to some pairwise distinct points $\la_1,\ldots \la_m\in \DD$.

In \cite{Agl-Lyk-You 2013} the authors introduced a stronger notion of $m$-extremal mappings: an analytic disc $h:\DD\to D$ is called \emph{$m$-extremal} (or shortly \emph{extremal}) if for any pairwise distinct points $\lambda_j$ in $\DD$, $j=1,\ldots, m$, it is a weak $m$-extremal with respect to $\lambda_j$. As mentioned in \cite{Kos-Zwo 2014}, $m$-extremals in this sense usually do not exist (for example there are no $m$-extremals in an annulus in the complex plane). This fact justifies the desire of introducing a weaker definition of $m$-extremal mappings.

On the other hand weak $m$-extremals are more natural due to the following self-evident observation: \emph{the interpolation data $\lambda_j\mapsto z_j$, $\DD\to D$ is extremally solvable if and only if there is a weak $m$-extremal $h$  in $D$ with respect to $\la_j$ such that $h(\la_j)=z_j$.}

\subsection{SNP problem and weak $m$-extremals}
The following simple result shows that weak $m$-extremals are naturally connected with the spectral Nevanlinna-Pick problem:
\begin{prop}\label{propt}
Let $\lambda_1,\ldots, \lambda_m$ be pairwise distinct points in the unit disc and let $(s_1,p_1), \ldots, (s_m, p_m)\in \GG_2$ be distinct. Then the following conditions are equivalent:
\begin{enumerate}
\item[(i)] the Pick problem $\lambda_j\mapsto (s_j, p_j)$ for the symmetrized bidisc is solvable;

\item[(ii)] there is $0<t\leq 1 $ and a weak $m$-extremal mapping $h$ with respect to $t\lambda_1,\ldots, t\lambda_m$ such that $h(t\la_j)= (s_j, p_j)$.
\end{enumerate}
\end{prop}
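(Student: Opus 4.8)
The plan is to prove the two implications separately, the reverse one being immediate and the forward one resting on a dilation-plus-continuity argument in the scaling parameter $t$.

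For (ii)$\Rightarrow$(i) I would argue directly. If $h$ is a weak $m$-extremal with respect to $t\la_1,\dots,t\la_m$ with $h(t\la_j)=(s_j,p_j)$, then in particular $h$ is an analytic disc in $\GG_2$ interpolating these data. Setting $g(\zeta):=h(t\zeta)$ gives $g\in\mathcal O(\DD,\GG_2)$ (indeed $g\in\mathcal O(\bar\DD,\GG_2)$ when $t<1$) with $g(\la_j)=h(t\la_j)=(s_j,p_j)$, so the Pick problem $\la_j\mapsto(s_j,p_j)$ for $\GG_2$ is solvable.

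For (i)$\Rightarrow$(ii), write $z_j=(s_j,p_j)$ and, for $t\in(0,1]$, let $P_t$ denote the interpolation data $t\la_j\mapsto z_j$. Put $S=\{t\in(0,1]:P_t\text{ is solvable}\}$. The key observation is a monotonicity coming from dilation: if $f\in\mathcal O(\DD,\GG_2)$ solves $P_t$ and $t<t'\le 1$, then $g(\zeta):=f(\tfrac{t}{t'}\zeta)$ is holomorphic on the disc of radius $t'/t>1$, maps into $\GG_2$, and satisfies $g(t'\la_j)=f(t\la_j)=z_j$; hence $P_{t'}$ is solvable even by a member of $\mathcal O(\bar\DD,\GG_2)$. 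Thus $S$ is an up-set, $t\in S$ forces $(t,1]\subseteq S$, and along $(t,1]$ the data is solvable in a neighbourhood of $\bar\DD$. By hypothesis $1\in S$, so $\tau:=\inf S$ is well defined and $\tau\le 1$.

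It then remains to show $\tau>0$ and that $P_\tau$ is extremally solvable. First $\tau>0$: otherwise $P_{t_n}$ would be solvable for a sequence $t_n\downarrow 0$ by maps $f_n\in\mathcal O(\DD,\GG_2)$ with $f_n(t_n\la_j)=z_j$; since $\GG_2$ is bounded these form a normal family, and using that $\GG_2$ is taut (so that the limit cannot escape to the boundary, the nodes being mapped to the fixed interior points $z_j$) a subsequence converges locally uniformly to $h\in\mathcal O(\DD,\GG_2)$ with $h(0)=\lim_n f_n(t_n\la_j)=z_j$ for every $j$, contradicting the distinctness of the $z_j$. The same normal-families argument applied to solutions of $P_t$ as $t\downarrow\tau$ shows $\tau\in S$. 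Finally $P_\tau$ admits no solution in $\mathcal O(\bar\DD,\GG_2)$: such a solution would extend holomorphically to a disc of radius $1+\delta$, and the dilation $\zeta\mapsto f(\tfrac{\tau}{t}\zeta)$ would then solve $P_t$ for all $t\in(\tfrac{\tau}{1+\delta},\tau)$, contradicting $\tau=\inf S$. Hence $P_\tau$ is extremally solvable, so by the observation recorded in Subsection~\ref{weak} its solving disc $h$ is a weak $m$-extremal with respect to $\tau\la_1,\dots,\tau\la_m$ satisfying $h(\tau\la_j)=(s_j,p_j)$; taking $t=\tau$ yields (ii). I expect the main technical point to be the normal-families step, where one must guarantee that the limit disc stays inside $\GG_2$ rather than touching $\partial\GG_2$; this is precisely where tautness (equivalently hyperconvexity of $\GG_2$, or the Agler--Young realization of $\GG_2$ as $\{|f_\omega|<1\ \forall\,\omega\in\bar\DD\}$ combined with the maximum principle) is used.
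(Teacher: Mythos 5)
Your proof is correct and takes essentially the same approach as the paper: the (ii)$\Rightarrow$(i) direction by rescaling, and the (i)$\Rightarrow$(ii) direction by taking $\tau=\inf\{t:\ t\la_j\mapsto(s_j,p_j)\ \text{is solvable}\}$, where your normal-families/tautness argument is exactly the ``standard argument'' the paper invokes for $\tau>0$ and solvability at $\tau$, and your dilation of a hypothetical solution in $\mathcal O(\bar\DD,\GG_2)$ is the paper's ``minimality of $t$'' step. No gaps; you have simply written out the details the paper leaves implicit.
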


\begin{proof}
If $f:\DD\to \GG_2$ solves the problem $t \lambda_j \mapsto (s_j,p_j)$, $j=1,\ldots,m$, for some $t\leq 1$, then $\lambda\mapsto f(t \lambda)$ solves $\lambda_j\mapsto (s_j, p_j)$, $\DD\to \GG_2$, $j=1,\ldots,m,$ so (ii) easily implies (i).

To show that (i) implies (ii) define $t$ as a infimum of all $s\leq 1$ such that the problem $s\la_j \mapsto (s_j, p_j)$, $j=1,\ldots, m$, has a solution. Clearly $t>0$, a standard argument implies that $t\la_j \mapsto (s_j,p_j)$, $\DD\to \GG_2$, $j=1,\ldots,m$, is solvable and the fact that it extremally solvable follows immediately from the minimality of $t$.
\end{proof}

Proposition~\ref{propt} remains true for more general domains (for example if $\GG_2$ is replaced with a bounded hyperconvex domain). It particular, it holds for the classical Cartan domains.

The symmetrized bidisc may be also given as an image of the classical Cartan domain of the first type $\rR_I=\{x\in \CC^{2\times 2}:\ ||x||<1\}$ under the mapping $$\pi:\CC^{2\times 2}\ni x\mapsto (\tr x, \det x) \in \CC^2$$ (see Lemma~\ref{lem1} for details).

The natural way to study the sprectrall Nevanlinna-Pick problem is to investigate the class of weak $m$-extremals in the symmetrized bidisc. We do it here showing, in particular, the following result which answers the question posed in \cite{Agl-Lyk-You 2013}:
\begin{theorem}\label{thrational} Any weak $m$-extremal in the symmetrized polydisc is rational and $\GG_2$-inner.
\end{theorem}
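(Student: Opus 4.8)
The plan is to exploit the correspondence, stated in the introduction, between weak $m$-extremals in $\GG_2$ and $m$-extremals in the Cartan domain $\rR_I$, together with the fact (proved in \cite{Kos-Zwo 2014}) that weak extremals and genuine extremals coincide on $\rR_I$. So my first step would be to lift a given weak $m$-extremal $h:\DD\to\GG_2$ to a map $H:\DD\to\rR_I$ with $\pi\circ H=h$, where $\pi(x)=(\tr x,\det x)$, and to argue that the lift may be chosen to be itself an $m$-extremal disc in $\rR_I$. This reduces the whole problem to proving that $m$-extremals in the Cartan domain $\rR_I$ are rational and $\rR_I$-inner: once $H$ is rational with $H^*(\zeta)\in\partial_s\rR_I$ for a.e.\ $\zeta\in\TT$, applying the polynomial map $\pi$ immediately yields that $h=\pi\circ H$ is rational, and one checks that $\pi$ carries the Shilov boundary of $\rR_I$ into the Shilov boundary $\partial_s\GG_2$, giving $\GG_2$-innerness of $h$.

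The heart of the argument is therefore the $\rR_I$ case, and here I would use the favourable geometry stressed in the introduction: $\rR_I$ is balanced and homogeneous, and Schur's algorithm applies. The strategy is to run a Schur-type reduction on the $2\times 2$ matrix disc $H$. At each step one uses an automorphism of $\rR_I$ (these act transitively and are explicit Möbius-type maps in the matrix variable) to normalize one interpolation node to the origin, then peels off a Blaschke-type factor, reducing the number of nodes by one while preserving extremality. Because an $m$-extremal cannot be ``pushed inward'' (there is no map in $\oO(\bar\DD,\rR_I)$ hitting the same data), the Schur parameters at each stage must lie on the boundary in the appropriate sense; this boundary saturation is exactly what forces the disc to be inner and, after finitely many steps (at most $m-1$, which is where the degree estimate promised in the introduction comes from), to be a finite product of rational pieces, hence rational.

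I expect the main obstacle to be two interlocking points. First, making the lifting step rigorous: one must show not merely that a holomorphic lift $H$ exists (the fibres of $\pi$ are one-dimensional away from the branch locus, so existence needs the symmetric-function structure of $\pi$), but that extremality is genuinely transported from $h$ upstairs to $H$ and back downstairs. The delicate direction is that $H$ inherits $m$-extremality in $\rR_I$; this should follow because any competitor $f\in\oO(\bar\DD,\rR_I)$ matching the lifted data would project to a competitor for $h$, contradicting extremal solvability of the original data, but one has to be careful that the data $z_j=h(\la_j)$ lift consistently. Second, controlling the Schur reduction at boundary-extremal steps: when a Schur parameter reaches the boundary of $\rR_I$ the automorphism degenerates, and one must verify that the algorithm still terminates with a rational inner output rather than stalling. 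I would isolate this as the technical core, handling the branch points of $\pi$ and the degenerate Schur steps by a limiting or direct boundary-regularity argument, and it is there that most of the real work will lie.
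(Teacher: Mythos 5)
Your lifting step is sound and agrees with the paper (which lifts to $\rR_{II}$ via Lemma~\ref{lem1}; transporting extremality upstairs is the easy projection argument: a competitor in $\oO(\bar\DD,\rR_{II})$ through the lifted points would project to a competitor for $f$). The fatal problem is what you reduce to: the statement ``$m$-extremals in $\rR_I$ are rational and $\rR_I$-inner'' is \emph{false}, so the reduction cannot be completed. The paper's Lemma~\ref{main} says precisely that Schur's algorithm terminates in the normal form $\Phi_1(\la\Phi_2(\ldots\la\Phi_k\left(\begin{smallmatrix}\la & 0\\ 0 & Z(\la)\end{smallmatrix}\right)))$ where $Z\in\oO(\DD,\DD)$ is an \emph{arbitrary} holomorphic self-map of the disc; conversely, for instance, the diagonal disc $\la\mapsto\left(\begin{smallmatrix}\la & 0\\ 0 & Z(\la)\end{smallmatrix}\right)$ is a $2$-extremal (complex geodesic) of $\rR_I$ for every $Z\in\oO(\DD,\DD)$ with $Z(0)=0$, because any competitor in $\oO(\bar\DD,\rR_I)$ would force a self-map of $\DD$, holomorphic past $\bar\DD$, fixing the interpolation nodes in its $(1,1)$ entry, contradicting Schwarz--Pick rigidity. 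So your ``boundary saturation of the Schur parameters'' never happens: extremality in the matrix ball costs nothing on the final parameter $Z$, which may be non-rational and non-inner. This is exactly why the theorem is nontrivial.

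The actual content of the paper is that weak $m$-extremality of the \emph{projection} $\pi\circ\psi^\tau$ in $\GG_2$ is strictly stronger than $m$-extremality of $\psi$ upstairs, and it is this extra information that forces $Z$ to be a finite Blaschke product (Lemma~\ref{ms}). The proof is by contradiction: if $Z$ is not a Blaschke product, it is replaced by an interpolant $Z_1\in\oO(\bar\DD,\DD)$ at the nodes which is not a Nash function; the modified disc still projects to a weak $m$-extremal of $\GG_2$, hence by properness of weak extremals in $\GG_2$ (Proposition~\ref{prop}) its off-diagonal (equivalently, after transposition, diagonal) entries have equal moduli a.e.\ on $\TT$; feeding this through the two-variable map $\psi(\la,\nu)$, the non-Nashness of $T$ upgrades identities along the graph of $T$ to identities in both variables, the unitarity equations on the Shilov boundary then pin down the structure, and the residual cases are killed by the technical Lemma~\ref{lembegin} (itself another Nash-function contradiction). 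None of these ingredients --- the properness input from $\GG_2$, the Nash-function interpolation trick, Lemma~\ref{lembegin} --- appears in your proposal, and without them the Schur reduction alone terminates at a free parameter $Z$ about which nothing can be said.
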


\section{Basic ideas and tools}

In this section we recall and introduce basic tools that will be used in the sequel. Some ideas are derived from our recent paper \cite{Kos-Zwo 2014}. We recall them for the convenience of the Reader.

\subsection{Symmetrized bidisc vs. bidisc}\label{symvsb}

Recall that $\GG_2$ may be given as the image of the bidisc $\DD^2$ under the mapping $$p:\CC^2\ni (\la_1, \la_2)\mapsto (\la_1 + \la_2 , \la_1 \la_2)\in \CC^2.$$ Moreover, $p|_{\DD^2}: \DD^2\to \GG_2$ is a proper holomorphic mapping and $\Sigma:=\{(2\la, \la^2):\ \la \in \DD\}$ is its locus set ($\Sigma$ is sometimes called the \emph{royal variety of $\GG_2$}). This, in particular, means that $$p|_{\DD^2\setminus p^{-1}(\Sigma)} : \DD^2\setminus p^{-1}(\Sigma) \to \GG_2 \setminus \Sigma$$ is a double branched holomorphic covering.

Thus, any analytic disc in $\GG_2$ omitting $\Sigma$ may be lifted to an analytic disc in $\DD^2$. Therefore, in principle, it is definitely easier to deal with weak $m$-extremals omitting the royal variety $\Sigma$.

The group of automorphims of the symmetrized bidisc is generated by the mappings
\begin{equation}\label{autg}
\GG_2 \ni p(\la_1, \la_2)\mapsto p(m(\la_1), m(\la_2))\in \GG_2,
\end{equation}
where $m$ is a M\"obius function and
\begin{equation}\label{autgl} \GG_2\ni (s,p) \mapsto (\omega s, \omega^2 p)\in \GG_2,
\end{equation}
where $\omega \in \TT$
(see \cite{Jar-Pfl 2004}).

Recall also that the Shilov boundary of $\GG_2$ is equal to $\{(\la_1 + \la_2, \la_1 \la_2):\ \la_1, \la_2\in \TT\}$.

\subsection{Symmetrized bidisc vs. classical Cartan domain}

Let us define
$$\pi:\CC^{2\times 2}\ni x\mapsto (\tr x, \det x)\in \CC^2.$$ Note that $\pi(\rR_I)=\GG_2$.

Any analytic disc in the symmetrized bidisc may be lifted to an analytic disc in $\bar\rR_I$. This gives the following result which is crucial for our considerations (see \cite{Agl} and \cite{Kos-Zwo 2014}):
\begin{lemma}\label{lem1} Let $f:\DD\to \GG_2$ be an analytic disc. Then there is $\phi\in \mathcal O(\DD, \bar \rR_{II})$ such that $f = \pi\circ \phi^\tau$. Moreover, either
\begin{itemize}
\item[-] $f$ is up to an automorphism of $\GG_2$ of the form $(0,f_2)$, or
\item[-] $\phi$ is an analytic disc in $\rR_{II}$.
\end{itemize}
\end{lemma}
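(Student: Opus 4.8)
The plan is to build the lift by hand and then read the dichotomy off from the maximum principle. Write $f=(s,p)$; since $\GG_2$ is bounded, $s,p\in H^\infty(\DD)$. I first record the pointwise shape of the lift: for symmetric $\phi=\begin{pmatrix}a&b\\ b&c\end{pmatrix}$ one has $\phi^\tau=\begin{pmatrix}b&a\\ c&b\end{pmatrix}$, so $\pi(\phi^\tau)=(2b,\,b^2-ac)$. Thus $\pi(\phi^\tau)=(s,p)$ forces $b=s/2$ and $ac=s^2/4-p=:q$. Computing the two singular values of $\phi$ gives the clean criterion $\|\phi\|\le 1\iff |a|^2+|c|^2+2|b|^2\le 1+|p|^2$ (observe that $|b^2-ac|=|p|$ holds automatically once $b$ and $q$ are fixed, regardless of how $ac$ is split). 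Minimizing $|a|^2+|c|^2$ under the constraint $|ac|=|q|$ by AM--GM returns precisely the defining inequality of $\bar\GG_2$; this is the pointwise identity that $\pi(\phi^\tau)$ runs over $\bar\GG_2$ as $\phi$ runs over $\bar\rR_{II}$ (cf.\ \cite{Agl}), and it shows the balanced choice $|a|=|c|=|q|^{1/2}$ is optimal.

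To make this holomorphic I would set $b:=s/2$ and factor $q=\theta_q O_q$ into its inner and outer parts. As the outer factor is zero-free I may take a holomorphic square root $O_q^{1/2}\in H^\infty$, and after \emph{any} splitting $\theta_q=\theta_1\theta_2$ into inner functions I put $a:=\theta_1 O_q^{1/2}$ and $c:=\theta_2 O_q^{1/2}$. Then $ac=q$ throughout $\DD$, and since inner functions are unimodular on $\TT$ we get $|a|=|c|=|q|^{1/2}$ a.e.\ on $\TT$; hence the balanced (optimal) form of the criterion holds there, and because $f^*(\zeta)\in\bar\GG_2$ a.e.\ we obtain $\|\phi^*\|\le 1$ a.e.\ on $\TT$. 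Each coefficient $\zeta\mapsto\langle\phi(\zeta)u,v\rangle$ is in $H^\infty$, so by the maximum modulus principle its interior supremum is bounded by its boundary values; taking the supremum over unit vectors $u,v$ yields $\|\phi\|\le 1$ on all of $\DD$. Thus $\phi\in\oO(\DD,\bar\rR_{II})$ with $\pi\circ\phi^\tau=f$, which is the unconditional part of the statement.

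For the dichotomy, suppose $\phi(\DD)\not\subset\rR_{II}$, i.e.\ $\|\phi(\zeta_0)\|=1$ for some interior $\zeta_0$. Taking top singular vectors $u_0,v_0$ at $\zeta_0$, the function $\langle\phi(\cdot)u_0,v_0\rangle$ is bounded holomorphic of modulus $1$ at $\zeta_0$, hence a unimodular constant $\omega$; equality in Cauchy--Schwarz then forces $\phi(\zeta)u_0\equiv\omega v_0$ and $\|\phi\|\equiv 1$ (the case $\sigma_1=\sigma_2\equiv1$ makes $\phi$ unitary-valued, hence constant, and is degenerate). Since $\phi=\phi^t$, Takagi factorization gives $v_0=\bar u_0$ up to a phase absorbed into $\omega$, so $\phi(\zeta)u_0\equiv\omega\bar u_0$ for a fixed unit vector $u_0=(\alpha,\beta)$. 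If $\alpha\beta=0$ this reads $b\equiv0$, i.e.\ $s\equiv0$ and $f=(0,f_2)$ outright. If $\alpha\beta\ne0$, solving the two linear equations for $a,c$ in terms of $b=s/2$ and substituting into $p=b^2-ac$ cancels the quadratic term and leaves an \emph{affine} relation $p=As+B$ with $|B|=1$ (in fact $B=-A^2/|A|^2$, $|A|\ge1$). Hence $f$ maps into a complex line cutting out a flat geodesic of $\GG_2$, and such discs form a single $\Aut(\GG_2)$-orbit together with $\{s=0\}$, so after an automorphism $f$ takes the form $(0,f_2)$.

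The construction in the first two paragraphs is routine: the singular-value criterion and the inner--outer bookkeeping are mechanical. The real difficulty is the closing step of the dichotomy. Extracting the flat direction $\phi u_0\equiv\omega\bar u_0$ is immediate, but converting the resulting complex line into the normal form $(0,f_2)$ requires the fact that all flat geodesics of $\GG_2$ are $\Aut(\GG_2)$-equivalent to $\{s=0\}$; the subtlety is that the natural normalization $\phi\mapsto U\phi U^t$ does \emph{not} in general descend through $\pi\circ{}^\tau$ (only those $U$ with $U^tJU\propto J$, $J=\begin{pmatrix}0&1\\ 1&0\end{pmatrix}$, correspond to the scalings \eqref{autgl}), so one must invoke the genuine automorphisms \eqref{autg} acting on the eigenvalues, and this interplay is where I expect the main obstacle. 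A secondary point worth isolating is that $\|\phi\|\equiv1$ reflects an obstruction to balancing $|a|=|c|$ in the interior -- as in $f=(0,\zeta)$, whose only lifts are diagonal with one unimodular entry -- rather than $f$ itself lying in $\partial\GG_2$.
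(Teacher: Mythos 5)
You should first note that the paper does not prove Lemma~\ref{lem1} at all: it quotes it from \cite{Agl} and \cite{Kos-Zwo 2014}. So your attempt has to be measured against those sources, and in fact your lifting construction is essentially the classical Agler--Young one: put $b=s/2$, factor $q=s^2/4-p$ into inner and outer parts, split the modulus evenly so that $|a^*|=|c^*|=|q^*|^{1/2}$ a.e.\ on $\TT$, and pull the boundary bound inside via the maximum principle applied to the coefficients $\langle\phi(\cdot)u,v\rangle$. This part is correct, with two small points you should state explicitly: the criterion $\|\phi\|\le 1\iff |a|^2+|c|^2+2|b|^2\le 1+|\det\phi|^2$ is valid only in conjunction with $|\det\phi|\le 1$ (automatic here, since $|p|\le 1$ on $\bar\GG_2$), and the case $q\equiv 0$ (image in the royal variety $\Sigma$) has no inner--outer factorization and must be disposed of separately, e.g.\ by taking $a=c=0$. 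Your dichotomy argument --- maximum modulus on $\langle\phi(\cdot)u_0,v_0\rangle$, equality in Cauchy--Schwarz, the symmetric-matrix relation $v_0=\bar u_0$, and the computation giving $p=As+B$ with $B=-A^2/|A|^2$, $|A|\ge 1$ --- is sound; I checked the algebra and your formula for $B$ is exactly right. (Also, the ``degenerate'' unitary-valued case cannot occur at all: a holomorphic unitary-valued matrix function is constant, so $f$ would be a constant lying in $\partial_s\GG_2$, contradicting $f(\DD)\subset\GG_2$.)

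The step you flag as the expected main obstacle is, however, not one, and leaving it unproved is the only real gap in your write-up. Close it as follows. Since $|A|=1$ is impossible (then $\beta:=1/\bar A$ is unimodular and on the line $s=\beta+\bar\beta p$ one has $s-\bar sp=\beta(1-|p|^2)$, hence $|s-\bar sp|+|p|^2\ge 1$, so the line misses $\GG_2$), we have $\beta=1/\bar A\in\DD$ and your affine relation reads $s=\beta+\bar\beta p$. For $(s,p)=p(\la_1,\la_2)$ this says exactly $\la_2=m_\beta(\la_1)$, where $m_\beta(z)=\frac{\beta-z}{1-\bar\beta z}$ is a M\"obius involution of $\DD$ with a fixed point $\mu=\frac{1-\sqrt{1-|\beta|^2}}{\bar\beta}\in\DD$ (for $\beta\ne 0$; for $\beta=0$ there is nothing to do). Take $m\in\Aut(\DD)$ with $m(\mu)=0$; then $m\circ m_\beta\circ m^{-1}$ is an involutive automorphism of $\DD$ fixing $0$ and different from the identity, hence equals $z\mapsto -z$. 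Consequently the automorphism of $\GG_2$ of the form \eqref{autg} induced by $m$ carries $\{p(\la,m_\beta(\la)):\la\in\DD\}$ onto $\{p(w,-w):w\in\DD\}=\{s=0\}\cap\GG_2$, i.e.\ after this automorphism $f=(0,f_2)$. This uses precisely the eigenvalue action you point to, and it confirms your (correct) suspicion that unitary congruences $\phi\mapsto U\phi U^t$ are the wrong tool here. With this five-line computation added, your proof is complete and self-contained, which is more than the paper offers for this lemma.
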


It is self evident that if $f:\DD\to \GG_2$ is a weak $m$-extremal and $\phi$ is an analytic disc in $\rR_{II}$ (respectively in $\rR_I$) such that $f = \pi\circ \phi$, then $\phi$ is $m$-extremal in $\rR_{II}$ (resp. in $\rR_I$). Moreover, $(0,f_2)$ is $m$-extremal in the symmetrized bidisc if and only if $f_2$ is a Blaschke product of degree at most $m-1$. Therefore, the problem of describing weak $m$-extremals in the symmetrized bidisc may be reduced to study of $m$-extremals in the classical Cartan domain of the second type. Here we may apply Schur's algorithm which reduces a problem of investigation of $m$-extremals to describing $2$-extremals i.e. complex geodesics. Formulas for complex geodesics in $\rR_{II}$ were found in \cite{Aba}. For the convenience of the Reader we sketch the idea how to get them. Thanks to the transitivity of $\Aut(\rR_{II})$ it suffices to find formulas for a complex geodesic passing through $0$ and an arbitrary point $a\in \rR_{II}$. Moreover, up to a composition with a linear automorphism we may assume that $a=\left(\begin{array}{cc} a_1 & 0 \\ 0 & a_2 \end{array} \right)$, where $0\leq a_2\leq a_1<1$. Then it is clear that any geodesic passing through $0$ and $a$ is of the form $\la \mapsto \left( \begin{array}{cc} \la & 0 \\ 0 & Z(\la) \end{array} \right)$, where $Z\in \mathcal O(\DD, \DD)$ fixes the origin and $Z(a_1)=a_2$. Therefore, Schur's algorithm gives the following:
\begin{lemma}\label{main} Let $f:\DD\to \rR_{II}$ be an $m$-extremal. Then there is $k\leq m-1$, there are $\Phi_1,\ldots,\Phi_k\in \Aut(\mathcal R_{II})$ and there is $Z\in \mathcal O(\DD,\DD)$ such that $$f(\lambda) = \Phi_1(\lambda(\Phi_2(\ldots \la\Phi_{k}\left( \begin{array}{cc} \lambda & 0 \\ 0 & Z(\lambda)\end{array}\right)))),\quad \la\in \DD.$$
\end{lemma}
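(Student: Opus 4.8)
The plan is to run Schur's algorithm, peeling off one automorphism and one factor of $\la$ at each step, and to argue by induction on $m$, with complex geodesics as the base case. For $m=2$ there is nothing new: a $2$-extremal is a complex geodesic, and by the discussion preceding the statement (Abate's formulas together with the transitivity of $\Aut(\rR_{II})$) every such geodesic already has the asserted shape with $k=1$.

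For the inductive step fix an $m$-extremal $f$ with $m\ge 3$. First I would normalise at the origin: by transitivity of $\Aut(\rR_{II})$ choose $\Phi_1\in\Aut(\rR_{II})$ with $\Phi_1(0)=f(0)$ and set $g:=\Phi_1^{-1}\circ f$, so that $g\in\mathcal O(\DD,\rR_{II})$ and $g(0)=0$. Since $\rR_{II}$ is the unit ball of the symmetric matrices in the operator norm, applying the scalar Schwarz lemma to $\la\mapsto\langle g(\la)\xi,\eta\rangle$ for unit vectors $\xi,\eta$ gives $\|g(\la)\|\le|\la|$; hence $f_1(\la):=g(\la)/\la$ extends holomorphically across $0$, lies in $\mathcal O(\DD,\bar\rR_{II})$, and satisfies $f=\Phi_1(\la\,f_1(\la))$.

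The heart of the proof is to show that $f_1$ is $(m-1)$-extremal and in fact maps into the open ball $\rR_{II}$. For extremality I transport interpolants across the Schur quotient. Let $\mu_1,\dots,\mu_{m-1}$ be distinct and, for the moment, different from $0$; the datum $\mu_j\mapsto f_1(\mu_j)$ is solvable (by $f_1$ itself), so suppose it failed to be extremally solvable, i.e.\ there were $h\in\mathcal O(\bar\DD,\rR_{II})$ with $h(\mu_j)=f_1(\mu_j)$. Then $\tilde f:=\Phi_1(\la\,h(\la))$ belongs to $\mathcal O(\bar\DD,\rR_{II})$ (on $\TT$ one has $\|\la h(\la)\|=\|h(\la)\|<1$, and $\Phi_1$ is holomorphic past $\bar\rR_{II}$), and it satisfies $\tilde f(\mu_j)=f(\mu_j)$ together with $\tilde f(0)=f(0)$; thus $\tilde f$ solves the $m$ conditions of $f$ at the distinct nodes $\mu_1,\dots,\mu_{m-1},0$ from within $\mathcal O(\bar\DD,\rR_{II})$, contradicting the $m$-extremality of $f$. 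Hence $f_1$ is weak $(m-1)$-extremal with respect to every tuple avoiding $0$. That $f_1$ avoids $\partial\rR_{II}$ follows because $\la\mapsto\|f_1(\la)\|$ is subharmonic and $\le 1$: were it $=1$ at an interior point it would be constant $1$, forcing $\|g(\la)\|=|\la|$, i.e.\ equality in the Schwarz lemma for the balanced convex domain $\rR_{II}$; then $g$ is a complex geodesic and $f$ is merely $2$-extremal, contrary to $m\ge 3$ (equivalently, the algorithm has already terminated at the base case).

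Granting these facts, the inductive hypothesis applied to the $(m-1)$-extremal $f_1$ supplies $\Phi_2,\dots,\Phi_k\in\Aut(\rR_{II})$ with $k-1\le m-2$ and $Z\in\mathcal O(\DD,\DD)$ realising $f_1$ in the nested form; substituting into $f=\Phi_1(\la\,f_1(\la))$ gives the claimed expression with $k\le m-1$. I expect the genuine obstacle to sit in the Key step, and specifically in the case the argument above postpones, namely when one of the interpolation nodes coincides with the reduction point $0$: there the relation $f_1(\mu_j)=g(\mu_j)/\mu_j$ degenerates and one must instead match $f$ to second order at $0$, which is handled either by promoting $0$ to a double node or by a limiting argument letting a genuine node tend to $0$. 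Establishing that this degenerate case still yields full $(m-1)$-extremality, so that the induction closes, is the delicate point; the passage to the open ball and the geodesic base case are where the convexity, homogeneity and Lempert theory of $\rR_{II}$ enter.
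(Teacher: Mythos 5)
Your route is the same as the paper's: the paper obtains this lemma exactly by the Schur-type reduction you describe (normalize by an automorphism so the disc vanishes at the origin, divide by $\la$, iterate down to the base case) combined with Abate's description of complex geodesics through $0$, up to automorphisms, as $\la\mapsto \left(\begin{array}{cc}\la&0\\0&Z(\la)\end{array}\right)$; the paper simply cites ``Schur's algorithm'' instead of writing out the induction. Your transport-of-interpolants step for node tuples avoiding $0$ is correct, and the degenerate case you flag (a node at $0$) does close in the way you suggest: if $h\in\oO(\bar\DD,\rR_{II})$ matches $f_1$ at $0,\mu_2,\dots,\mu_{m-1}$, then $\tilde f:=\Phi_1(\la h(\la))$ matches $f$ at $0$ to second order and at the $\mu_j$; since $\tilde f(\bar\DD)$ is compactly contained in $\rR_{II}$ and $f(\eps)-\tilde f(\eps)=O(\eps^2)$, adding to $\tilde f$ the correction $\bigl(f(\eps)-\tilde f(\eps)\bigr)\,\la\prod_{j\ge 2}(\la-\mu_j)\big/\bigl(\eps\prod_{j\ge 2}(\eps-\mu_j)\bigr)$, whose sup norm is $O(\eps)$, produces for small $\eps$ a map in $\oO(\bar\DD,\rR_{II})$ matching $f$ at the $m$ distinct nodes $0,\eps,\mu_2,\dots,\mu_{m-1}$, contradicting $m$-extremality.

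One step, however, is wrong as written: the claim that $f_1$ avoids $\partial\rR_{II}$ because otherwise ``$f$ is merely $2$-extremal, contrary to $m\ge 3$.'' Extremality is monotone the other way: if $f$ is $2$-extremal then it is $m$-extremal for every $m\ge 2$ (an interpolant in $\oO(\bar\DD,\rR_{II})$ through $m$ of its points is in particular an interpolant through two of them), so there is no contradiction, and $\|f_1\|\equiv 1$ genuinely occurs --- e.g.\ whenever $f$ is itself a complex geodesic through $f(0)$. So the assertion ``$f_1$ maps into the open ball'' is false in general. What saves your proof is precisely your parenthetical remark: in that case $g=\Phi_1^{-1}\circ f$ satisfies $\|g(\la)\|=|\la|$, hence is a complex geodesic through the origin, and then $f=\Phi_1\circ g$ already has the asserted form with $k=1$ by the base-case description. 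The correct statement is a dichotomy, not a contradiction: either $f_1$ is an analytic disc in $\rR_{II}$ and the induction proceeds, or the algorithm terminates at that step. With that rephrasing your argument is complete and coincides with the paper's.
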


\bigskip
The group of automorphisms of $\rR_{I}$ is generated by the mappings \begin{equation}\label{autgg}
\Phi_b(x):=(1-b b^*)^{-\frac 12} (b - x)(1- b^* x)^{-1} (1-b^* b )^{\frac 12},\quad x\in \rR_{I},
\end{equation} where $b\in \rR_{I}$,
and by $$x\mapsto U x V,\quad x\in \rR_{I},$$ where $U$ and $V$ are unitary. Note that $\Phi_b(0)=b$ and $\Phi_b(b)=0$. It is clear that that any automorphism of $\rR_{I}$ is a rational mapping.

An automorphism of $\GG_2$ of the form \eqref{autg} induces the automorphism $\Phi_a$ of the form \eqref{autgg}, were $a$ is scalar, and an automorphism \eqref{autgl} induces the automorphism $x\mapsto U x U^t$, where $U=\tilde \omega I$, $\tilde \omega\in \TT$.

The Shilov boundary of $\GG_2$ may be expressed in terms of the Shilov boundary of $\rR_{II}$ as well.
Recall here that $\partial_s \rR_{II}$ consists of symmetric unitary matrices. Then, one may check that
\begin{equation}\label{gshil}
\partial_s \GG_2 =\{\pi(U):\ \text{$U$ is unitary and both $U$ and $U^\tau$ are symmetric}\}.
\end{equation}

We shall use the facts presented above several times. We shall also need the following simple
\begin{remark}\label{remsom} Suppose that the mapping $\phi$ appearing in Lemma~\ref{lem1} is symmetric and $\phi^\tau$ is symmetric too, i.e. $\phi = \left( \begin{array}{cc} \phi_1 & \phi_2 \\ \phi_2 & \phi_1 \end{array}\right)$. Then $a:=(\phi_1 + \phi_2)$ and $b:= (\phi_1-\phi_2)$, are holomorphic selfmappings of $\DD$ such that $p(a, b) = \phi$. In particular, if $f$ is a weak $m$-extremal than at least one of functions $a$ and $b$ is a Blaschke product of degree at most $m-1$.
\end{remark}

In \cite{Kos-Zwo 2014} we have shown that in $\rR_{I}$ and $\rR_{II}$ the class of weak $m$-extremal mappings coincide with the class of $m$-extremal mappings.

\begin{remark}\label{uzupelnic1} Let $\phi:\DD\to \EE$ be an analytic disc. Lifting it to $\bar \rR_I$ we get an analytic disc $\psi$ such that $\phi = \Pi \circ \psi$. There are Blaschke products (finite or infinite) or unimodular constants $B_1$ and $B_2$ such that $\psi_{12} = B_1  h_{12}$ and $\psi_{21} = B_2  h_{21}$, where holomorphic functions $h_{12}$ and $h_{21}$ do not vanish on $\DD$.

Put $h:=\sqrt{h_{12} h_{21}}$, where the branch of the square root is arbitrarily chosen. Then $\tilde \psi:= \left( \begin{array}{cc} \psi_{11} & B_1 h \\ B_2 h & \psi_{22} \end{array} \right)$ lands in $\bar \rR_I$ and $\phi = \Pi \circ \tilde \psi$.

If additionally $\psi$ lies in $\rR_I$, then $\tilde \psi$ lies in $\rR_I$ too.
\end{remark}

It follows from Schur's algorithm that (weak) $m$-extremals in $\rR_I$ and $\rR_{II}$ are proper. Using this, Remark~\ref{uzupelnic1} and properness of the mapping $\Pi|_{\rR_{II}}: \rR_{II}\to \EE$ (see e.g. \cite{Abo-Whi-You 2007} and \cite{Edi-Kos-Zwo 2013}, \cite{Kos-Zwo 2013} for some information about the geometry of $\EE$), where $\EE$ is the tetrablock and $\Pi(x) = (x_{11}, x_{22}, \det x)$, $x=(x_{ij})\in \CC^{2\times 2}$, we get an analogous result for weak $m$-extremals in the symmetrized bidisc (see also \cite{Kos-Zwo 2014}, Remark~15):
\begin{prop}\label{prop} Any weak $m$-extremal mapping in $\GG_2$ is proper. In particular, if $f:\DD\to \GG_2$ is a weak $m$-extremal in the symmetrized bidisc such that $f=\pi \circ \phi$, where $\phi\in \mathcal O (\DD,\rR_I)$, then $|\phi_{12}^*| = |\phi_{21}^*|$ almost everywhere on $\TT$.
\end{prop}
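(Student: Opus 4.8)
The plan is to transport the problem to the matrix ball, where properness of $m$-extremals is already available from Schur's algorithm, and then to read off both assertions from the boundary behaviour of a suitable matrix lift. Given a weak $m$-extremal $f$, Lemma~\ref{lem1} produces $\phi\in\mathcal O(\DD,\bar\rR_{II})$ with $f=\pi\circ\phi^\tau$. In the exceptional case $f$ is, up to an automorphism of $\GG_2$, of the form $(0,f_2)$ with $f_2$ a finite Blaschke product of degree $\le m-1$; then $f$ is proper (indeed $\GG_2$-inner), since for $\zeta\in\TT$ the point $(0,f_2^*(\zeta))$ lies in $\partial_s\GG_2$ (solve $\la_1=-\la_2$, $\la_1\la_2=f_2^*(\zeta)\in\TT$), and automorphisms preserve properness. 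In the main case $\phi$ is an analytic disc in $\rR_{II}$, hence $\phi^\tau$ is an analytic disc in $\rR_I$; by the observation preceding Lemma~\ref{main}, $\phi^\tau$ is $m$-extremal in $\rR_I$, and Lemma~\ref{main} (Schur's algorithm) shows such extremals are proper, so $\|(\phi^\tau)^*(\zeta)\|=1$ for a.e. $\zeta\in\TT$.

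For properness of $f$ itself I would observe that $\pi$ is continuous on $\bar\rR_I$, maps $\rR_I$ onto the open set $\GG_2$, and sends $\bar\rR_I$ into $\bar\GG_2$; a routine compactness argument then shows $\pi|_{\rR_I}$ is proper, since a sequence $x_n\in\rR_I$ with $\pi(x_n)\to w\in\partial\GG_2$ cannot cluster inside $\rR_I$ (a limit point $x\in\rR_I$ would give $\pi(x)=w\in\GG_2$). As the composition of proper maps is proper, $f=\pi\circ\phi^\tau$ is proper. Equivalently, following the route indicated in the text, one composes the proper lift with the proper projection $\Pi|_{\rR_{II}}:\rR_{II}\to\EE$ onto the tetrablock and then projects $\EE\to\GG_2$.

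It remains to prove $|\phi_{12}^*|=|\phi_{21}^*|$ a.e.\ for a lift $\phi\in\mathcal O(\DD,\rR_I)$ with $f=\pi\circ\phi$. Such a $\phi$ is $m$-extremal, hence proper, in $\rR_I$, so $\|\phi^*\|=1$ a.e. I would then pass to the balanced lift supplied by Remark~\ref{uzupelnic1}, whose off-diagonal entries are $B_1h$ and $B_2h$ with $B_1,B_2$ inner and a common non-vanishing factor $h$; since $|B_1^*|=|B_2^*|=1$ a.e., this yields $|\phi_{12}^*|=|h^*|=|\phi_{21}^*|$ a.e. The equivalent and more conceptual formulation is that once one knows $f$ attains the Shilov boundary, i.e.\ $|\det\phi^*|=1$ a.e., the two singular values of $\phi^*$ both equal $1$, so $\phi^*$ is unitary a.e., and orthonormality of the rows and the columns of a $2\times2$ unitary forces $|\phi_{12}^*|^2=1-|\phi_{11}^*|^2=|\phi_{21}^*|^2$.

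The delicate point is precisely this last step. Properness delivers only boundary values of norm one (touching the \emph{topological} boundary of $\rR_I$), whereas the off-diagonal balance genuinely encodes the stronger fact that $\det\phi$ is inner, equivalently that $f$ reaches $\partial_s\GG_2$. Mere $\GG_2$-properness is not enough, as the proper disc $\la\mapsto(\tfrac32\la,\tfrac12\la^2)$ shows: it has $|p^*|\equiv\tfrac12$ on $\TT$. The argument must therefore exploit that $\phi^\tau$ originates from a symmetric $\rR_{II}$-disc and that the off-diagonal data can be symmetrised without altering $\pi\circ\phi$, which is exactly why the properness of $\Pi|_{\rR_{II}}$ and the balanced lift of Remark~\ref{uzupelnic1} are indispensable; I expect this to be the main obstacle.
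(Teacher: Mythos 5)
Your proof of the first (and main) assertion rests on the claim that $\pi|_{\rR_I}\colon\rR_I\to\GG_2$ is proper, and this claim is false. For $0\le r<1$ the matrices $X_r=\left(\begin{smallmatrix}0 & r\\ 0 & 0\end{smallmatrix}\right)$ lie in $\rR_I$, leave every compact subset of $\rR_I$ as $r\to1$, and satisfy $\pi(X_r)=(0,0)\in\GG_2$ for all $r$; equivalently, $\pi$ sends the boundary point $\left(\begin{smallmatrix}0&1\\ 0&0\end{smallmatrix}\right)$ of $\rR_I$ to an \emph{interior} point of $\GG_2$. (In general $\pi(x)\in\GG_2$ if and only if both eigenvalues of $x$ lie in $\DD$, which is perfectly compatible with $\|x\|=1$.) Note also that your compactness argument proves the wrong implication: properness requires that $x_n\to\partial\rR_I$ force $\pi(x_n)\to\partial\GG_2$, not the converse. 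The defect cannot be patched within your scheme, because properness of $f$ genuinely does not follow from properness, or even $m$-extremality, of a lift: the disc $\Phi(\la)=\left(\begin{smallmatrix}0&\la\\ 1/2&0\end{smallmatrix}\right)$ is $m$-extremal in $\rR_I$ for every $m$ (its $(1,2)$-entry is the identity, and a selfmap of $\DD$ holomorphic on $\bar\DD$ cannot fix two points of $\DD$) and is proper, yet $\pi\circ\Phi=(0,-\la/2)$ has image relatively compact in $\GG_2$. So weak extremality of $f$ itself, not merely of its lift, must enter the properness proof. Your tetrablock variant fails twice over: the map $(x_1,x_2,x_3)\mapsto(x_1+x_2,x_3)$ is not proper on $\EE$ (it sends the boundary point $(1/2,\,i/2,\,0)$ of $\EE$ to $((1+i)/2,0)\in\GG_2$), and for a symmetric lift $\psi$ it produces $\pi\circ\psi$, not $f=\pi\circ\psi^\tau$ --- and $\psi^\tau$ is not symmetric, so properness of $\Pi|_{\rR_{II}}$ says nothing about it. This is exactly why the paper's justification (the passage preceding the proposition, deferring details to \cite{Kos-Zwo 2014}, Remark~15) combines Schur properness with the two tools you treat as side remarks: the balanced lift of Remark~\ref{uzupelnic1} and properness of $\Pi|_{\rR_{II}}$. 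The projection is proper only on matrices whose off-diagonal entries have equal modulus (such a matrix is conjugate by a diagonal unitary to a symmetric one with the same $\Pi$-image and the same norm); balancing the lift is what puts one into that situation.

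The second assertion is likewise not proved for the map named in the statement. Your balanced-lift computation shows $|\tilde\phi_{12}^*|=|h^*|=|\tilde\phi_{21}^*|$ for the \emph{modified} lift $\tilde\phi$ of Remark~\ref{uzupelnic1}, whereas the proposition concerns the given lift $\phi$, whose off-diagonal entries are $B_1h_{12}$ and $B_2h_{21}$ with, in general, $|h_{12}^*|\neq|h_{21}^*|$; the factorization by itself gives nothing about $\phi$. Your ``more conceptual'' alternative assumes $|\det\phi^*|=1$ a.e., which is precisely the $\GG_2$-inner property, i.e.\ the conclusion of Theorem~\ref{thrational}, proved later \emph{using} this proposition --- circular, as you half-acknowledge in your closing paragraph without resolving it. The gap is closable: (i) once properness of $f$ is known, $f^*(\zeta)=\pi(\phi^*(\zeta))\in\partial\GG_2$ a.e., so the contraction $\phi^*(\zeta)$ has a unimodular eigenvalue; a contraction with a unimodular eigenvalue is unitarily diagonalizable (the eigenvector is a maximizing vector, so its orthogonal complement is invariant), and the off-diagonal entries of $U\operatorname{diag}(\mu_1,\mu_2)U^*$ are $(\mu_1-\mu_2)\alpha\bar\beta$ and $(\mu_1-\mu_2)\bar\alpha\beta$, of equal modulus; or (ii) both $\phi$ and $\tilde\phi$ are $m$-extremal, hence proper, lifts of $f$ in $\rR_I$ with the same diagonal and the same determinant, so a.e.\ on $\TT$ they have the same singular values $1$ and $|\det\phi^*|$, hence equal Frobenius norms, which gives $|\phi_{12}^*|^2+|\phi_{21}^*|^2=2|h^*|^2=2|\phi_{12}^*|\,|\phi_{21}^*|$ and therefore $|\phi_{12}^*|=|\phi_{21}^*|$. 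Neither repair appears in your text, so as written both halves of the proposition remain unproven.
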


Remark~\ref{uzupelnic1} has also the following consequence:
\begin{remark}\label{uzupelnic2}
Let $\phi:\DD\to \GG_2$ be an analytic disc and let $\psi:\DD\to \bar \rR_{II}$ be such that $\phi = \pi\circ \psi^{\tau}$ and $\psi_{11} = B_1 h$, $\psi_{22} = B_2 h$ for some Blaschke products (or unimodular constants) $B_1$, $B_2$ and a holomorphic function $h$. Then $\tilde \psi:=\left( \begin{array}{cc} h & \psi_{12} \\ \psi_{21} & B_1 B_2 h \end{array} \right)$ is an analytic disc in $\bar\rR_{II}$ such that $\phi = \pi\circ \tilde \psi$ and $|\tilde \psi_{22}|\leq |\tilde \psi_{11}|$.
\end{remark}

\begin{remark}\label{uzupelnic}
If the problem
\begin{equation}
\label{finn}
\la_j\mapsto (s_j,p_j),\quad \DD\to \GG_2,
\end{equation}
has a solution, then there is a solution of the form $\la\mapsto \psi(t\la)$, where $\psi$ is a weak $m$-extremal for $t\la_j$ and $(s_j, p_j)$ and $0<t\leq 1$ is properly chosen. Therefore we believe that study of weak $m$-extremals is important.

Moreover, thanks to Remark~\ref{uzupelnic2} we may always find a solution $\phi$ of \eqref{finn} such that
\begin{equation}\label{konn}
|\phi_{21}|\leq |\phi_{12}|.
\end{equation}

\end{remark}



\subsection{Other tools and definitions}\label{additional}
Since automorphisms of considered domains are rational it is natural to use the notion of a Nash function. Let us recall its definition.

Let $\Omega$ be a subdomain of $\CC^n$. We say that a holomorphic function $f$ on $\Omega$ is a \emph{Nash function} if there is a non-zero complex polynomial $P:\CC^n \times \CC\to \CC$ such that $P(x, f(x))=0$ for $x\in \Omega$. Similarly, a holomorphic mapping $f$ is called a \emph{Nash mapping} if its every component is a Nash function. We shall need the following, classical result (see~\cite{Two}):
\begin{theorem}
The set of Nash functions on a domain $\Omega$ in $\CC^n$ is a subring of the ring of holomorphic functions on $\Omega$.
\end{theorem}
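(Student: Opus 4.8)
The plan is to recast the Nash condition in the language of field extensions, which makes the ring structure transparent. First I would note that writing the annihilating polynomial as $P(x,t)=\sum_{j=0}^{d}a_j(x)t^{j}$ with $a_j\in\CC[x_1,\dots,x_n]$ and $a_d\not\equiv 0$, the relation $P(x,f(x))=0$ says exactly that $f$ is \emph{algebraic} over the field $K:=\CC(x_1,\dots,x_n)$ of rational functions; conversely, any holomorphic $f$ satisfying a polynomial identity with rational-function coefficients becomes Nash after clearing denominators. So the content of the theorem is that the holomorphic functions algebraic over $K$ are closed under addition and multiplication. Since $\Omega$ is connected, $\mathcal O(\Omega)$ is an integral domain; let $M$ denote its field of fractions, into which $K$ embeds by restricting rational functions to $\Omega$ (a nonzero polynomial cannot vanish on the open set $\Omega$, so denominators remain nonzero).

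Next I would invoke the standard fact that the elements of $M$ algebraic over the subfield $K$ form a subfield $L\subseteq M$, the relative algebraic closure of $K$ in $M$: if $\alpha,\beta\in M$ are algebraic over $K$, then $K(\alpha,\beta)$ is a finite extension of $K$, whence every element of it—in particular $\alpha+\beta$, $\alpha\beta$ and $-\alpha$—is algebraic over $K$. With this in hand the theorem is immediate, for the Nash functions are precisely $\mathcal O(\Omega)\cap L$, the intersection of two subrings of $M$; since $\CC\subseteq K\subseteq L$ the constants $0,1$ lie in it, and the intersection of rings is a ring. The main obstacle, if one wants the argument self-contained, is exactly this algebraic input—equivalently, knowing that sums and products of algebraic elements are again algebraic rather than merely holomorphic.

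If a constructive route is preferred—one that also furnishes explicit bounds on the degree of the annihilating polynomial, in the spirit of the degree estimates elsewhere in the paper—I would produce the witnessing polynomials by elimination. Given $P(x,t)=\sum a_j t^{j}$ of $t$-degree $d$ annihilating $f$ and $Q(x,t)=\sum b_j t^{j}$ of $t$-degree $e$ annihilating $g$, set
\begin{equation*}
R_+(x,s):=\operatorname{Res}_t\bigl(P(x,t),\,Q(x,s-t)\bigr),\qquad R_\times(x,s):=\operatorname{Res}_t\Bigl(P(x,t),\,t^{e}Q(x,s/t)\Bigr),
\end{equation*}
where $t^{e}Q(x,s/t)=\sum_j b_j(x)s^{j}t^{e-j}\in\CC[x,s][t]$. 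At every $x$ with $a_d(x)\neq 0$ the two polynomials share the root $t=f(x)$ when $s=f(x)+g(x)$ (respectively $s=f(x)g(x)$), so the resultant vanishes there, and by the identity theorem $R_+(x,f(x)+g(x))\equiv 0$ and $R_\times(x,f(x)g(x))\equiv 0$ on $\Omega$. The only delicate point is that these annihilating polynomials be nonzero; this holds because, by the product formula for resultants, $R_+=a_d^{e}b_e^{d}\prod_{i,j}(s-\alpha_i-\beta_j)$ and $R_\times=a_d^{e}b_e^{d}\prod_{i,j}(s-\alpha_i\beta_j)$ (with $\alpha_i,\beta_j$ the $t$-roots of $P,Q$), so the leading coefficient in $s$ is the power-product $a_d^{e}b_e^{d}$ of the nonzero leading coefficients and hence is not identically zero. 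This non-vanishing is precisely what guarantees $f+g$ and $fg$ are genuinely Nash, and it is the step I would treat most carefully.
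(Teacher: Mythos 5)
Your proof is correct, but there is essentially nothing in the paper to compare it against: the paper does not prove this theorem at all, it states it as a classical result and simply cites Tworzewski \cite{Two}. So your proposal supplies an argument the paper omits, and both of your routes are sound. The field-theoretic one is the cleaner: identifying the Nash functions with the elements of $\mathcal O(\Omega)$ that are algebraic over $K=\CC(x_1,\dots,x_n)$ inside the fraction field $M$ of $\mathcal O(\Omega)$ (connectedness of $\Omega$ is used twice here --- to make $\mathcal O(\Omega)$ an integral domain and to embed $K$ into $M$, and also to rule out an annihilating polynomial of $t$-degree zero), and then invoking that the relative algebraic closure of $K$ in $M$ is a subfield, makes the ring structure immediate; since $L$ is even a field, closure under subtraction comes for free. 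The resultant computation is also correct: the vanishing $R_+(x,f(x)+g(x))\equiv 0$ and $R_\times(x,f(x)g(x))\equiv 0$ follows from your common-root argument at the points where $a_d(x)\neq 0$ together with the identity theorem (or, even more directly and without any genericity assumption, from the fact that $\operatorname{Res}_t$ lies in the ideal generated by the two polynomials in $\CC[x,s][t]$, so a common zero of both forces the resultant to vanish), and you correctly isolate the one real danger --- that the resultant might be identically zero --- and dispose of it via the product formula, whose leading $s$-coefficient $a_d^e b_e^d$ is a nonzero polynomial. What the second route buys, beyond self-containedness, is the explicit degree bound $de$ for the annihilating polynomials of $f+g$ and $fg$, which is in the quantitative spirit of Remark~\ref{deg}; what the first buys is brevity and the extra information that the Nash functions holomorphic and nonvanishing denominators aside even form a field after inverting nonzero elements.
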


Finally, let us recall that an analytic disc $f:\DD\to \GG_2$ is said to be \emph{$\GG_2$-inner} if $f^*(\zeta)\in \partial_s \GG_2$, for almost all $\zeta\in \TT$.

\section{Proof}

Let $f:\DD\to \GG_2$ be a weak $m$-extremal. If, up to a composition with an automorphism of the symmetrized bidisc, $f$ is of form $f=(0,f_2)$, then $f_2$ is a Blaschke product of degree $m-1$. Otherwise, by Lemma~\ref{lem1}, there is an $m$ extremal $\psi$ in $\mathcal R_{II}$ such that $f=\pi\circ \psi^\tau$. It follows from Lemma~\ref{main} that $\psi(\lambda) = \Phi_{1}(\lambda(\Phi_{2}(\ldots \la \Phi_{k}\left( \begin{array}{cc} \lambda & 0 \\ 0 & Z(\lambda)\end{array}\right))))$, $\la\in \DD$, for some $k\leq m-1$, $\Phi_1,\ldots, \Phi_k \in \Aut(\rR_{II})$, and a holomorphic mappings $Z:\DD\to \DD$. Our aim is to show that
\begin{lemma}\label{ms} $Z$ is a Blaschke product of degree ad most $m-1$.
\end{lemma}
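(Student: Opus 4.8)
The plan is to split the statement into two parts: first, that $Z$ is inner, i.e.\ $|Z^*(\zeta)|=1$ for almost every $\zeta\in\TT$; and second, that an inner $Z$ arising in this way is necessarily a finite Blaschke product of degree at most $m-1$. The starting observation is that, since each $\Phi_j$ is a rational automorphism (formula \eqref{autgg}) and multiplication by $\la$ is polynomial, every entry of $\psi$ is a rational function of the two quantities $\la$ and $Z(\la)$. Consequently the components of $f=\pi\circ\psi^\tau$, namely $s=\tr\psi^\tau=2\psi_{12}$ (recall that $\psi$ is symmetric, being valued in $\rR_{II}$) and $p=\det\psi^\tau=-\det\psi$, are rational functions $s=S(\la,Z(\la))$ and $p=P(\la,Z(\la))$ of $\la$ and $Z(\la)$.

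For the first part I would use properness. By Proposition~\ref{prop} the disc $f$ is proper, so $f^*(\zeta)\in\partial\GG_2$ for almost every $\zeta\in\TT$; writing out the defining inequality of $\GG_2$ on the boundary and squaring to remove the modulus gives the polynomial identity
\begin{equation*}
\bigl(s^*-\overline{s^*}\,p^*\bigr)\bigl(\overline{s^*}-s^*\,\overline{p^*}\bigr)=\bigl(1-|p^*|^2\bigr)^2\qquad\text{a.e. on }\TT.
\end{equation*}
Substituting $s^*=S(\zeta,Z^*(\zeta))$, $p^*=P(\zeta,Z^*(\zeta))$ and using $\overline\zeta=1/\zeta$, I would analyse this relation to conclude that it can hold only if $|Z^*(\zeta)|=1$ a.e. The delicate point is that the factor $\la$ sitting in the corner of $\left(\begin{array}{cc}\la&0\\0&Z(\la)\end{array}\right)$ already forces the lift $\psi$ to touch the topological boundary of $\rR_{II}$ irrespective of $Z$; hence one must check that, after passing through $\pi^\tau$, the boundary equation is genuinely non-degenerate in the $Z$-variable and really pins $|Z^*|=1$ rather than being satisfied automatically. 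This is the first place where the symmetrization $\pi$ (rather than the matrix domain itself) does essential work.

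Granting that $Z$ is inner, I would then upgrade the boundary identity to an algebraic relation. On $\TT$ one has $\overline{Z^*}=1/Z^*$ and $\overline\zeta=1/\zeta$, so the reflected quantities $\overline{s^*}$ and $\overline{p^*}$ become the same rational expressions evaluated at $1/\zeta$ and $1/Z^*(\zeta)$. Clearing denominators turns the displayed identity into a polynomial equation $Q(\zeta,Z^*(\zeta))=0$ holding a.e.\ on $\TT$; since both sides are rational in $\la$ and in $w=Z(\la)$, this forces $Q(\la,Z(\la))\equiv0$ on $\DD$ by analytic continuation. Provided $Q$ is not identically zero in its second variable (again a non-degeneracy check), this exhibits $Z$ as a Nash function of $\la$. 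An algebraic inner function can carry neither a singular inner factor nor an infinite Blaschke factor, so $Z$ must be a finite Blaschke product; at the same time all entries of $\psi$, and hence $f$, become Nash, which is precisely the rationality asserted in Theorem~\ref{thrational}.

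Finally, for the degree bound I would count valences. Writing $p=-\det\psi$ as a rational function of $\la$ and the finite Blaschke product $Z$, and using that the representation of Lemma~\ref{main} involves only $k\le m-1$ automorphisms, the number of preimages of a generic value under $Z$ is controlled by the corresponding count for the weak $m$-extremal $f$, which is at most $m-1$ by extremality. I expect the main obstacle to lie not in this bookkeeping but in the two non-degeneracy checks above: namely, in genuinely extracting from the boundary equation both the inner-ness $|Z^*|=1$ and a non-trivial algebraic relation for $Z$, since the automatic saturation of $\partial\rR_{II}$ by the corner variable $\la$ makes it necessary to track precisely how $\pi^\tau$ recombines $\la$ and $Z$ before the constraint becomes effective.
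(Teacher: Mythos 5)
Your plan has two genuine gaps, and they are not the ``checks'' you defer but the actual substance of the lemma. First, the non-degeneracy you postpone twice (that properness pins $|Z^*|=1$, and that the cleared-denominator relation $Q(\la,Z(\la))\equiv 0$ is non-trivial in the second variable) is exactly the difficulty you yourself diagnose: since the corner factor $\la$ already saturates $\partial\rR_{II}$ irrespective of $Z$, the boundary relations you extract may perfectly well be \emph{identically} satisfied in the $Z$-variable, in which case your scheme yields no information about $Z$ whatsoever. The paper's proof is built precisely around this possibility, and in the opposite logical direction: it assumes $Z$ is \emph{not} a Blaschke product of degree at most $m-1$, uses classical Pick theory at the interpolation nodes to replace $Z$ by an interpolant $Z_1\in\mathcal O(\bar\DD,\DD)$ which is deliberately chosen \emph{not} to be a Nash function, and then turns every algebraic relation forced by properness into an \emph{identity} (any non-trivial relation would make $Z_1$, or the function $T$ from a second Schur decomposition, Nash). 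These identities are promoted to two-variable identities such as \eqref{11}, which are then mined further (boundary unitarity, the square-root relation $b=\tilde b^2$, removal of zeros of $\tilde b$ by automorphisms) until one lands in one of two terminal configurations --- a weak extremal omitting the royal variety $\Sigma$, killed by Rouch\'e's theorem, or a disc of the form $p(\la,T_1(\la))$, killed by the separate technical Lemma~\ref{lembegin}, whose proof is itself a page of work with the same non-Nash machinery. So where you hope the first-level relation is non-degenerate, the paper shows it can degenerate through several successive levels and must be chased all the way down; nothing in your proposal replaces that chase.

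Second, the degree bound cannot come out of your argument even in principle, because you use weak $m$-extremality of $f$ only through Proposition~\ref{prop}, i.e.\ through properness. Properness is strictly weaker: for \emph{any} finite Blaschke product $Z$, of arbitrarily large degree, the formula of Lemma~\ref{main} produces a rational, proper, $\GG_2$-inner disc, so no combination of properness, innerness and algebraicity can ever force $\deg Z\le m-1$. In the paper the bound is an artifact of the Pick-theoretic setup of the contradiction: extremal solvability of the scalar data $\la_j\mapsto Z(\la_j)$ at $m$ nodes means the Pick matrix is singular, whence the solution is unique and is a Blaschke product of degree at most $m-1$; your ``valence count'' for $f$ has no bound of $m-1$ attached to it by weak extremality, and the interpolation nodes --- the only place where the hypothesis genuinely enters --- never appear in your proposal at all.
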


We start the proof of this fact with the following technical result:
\begin{lemma}\label{lembegin}
Let $h\in \mathcal O(\bar \DD, \DD).$ Then $\phi:\la \mapsto p (\la, h(\la)),$ is not a weak $m$-extremal in $\GG_2$ for any $m$.
\end{lemma}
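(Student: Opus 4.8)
The plan is to fix arbitrary pairwise distinct nodes $\la_1,\dots,\la_m\in\DD$ and show that the interpolation data $\la_j\mapsto\phi(\la_j)$ is \emph{not} extremally solvable; by the self-evident observation of Subsection~\ref{weak} this is precisely the statement that $\phi$ fails to be a weak $m$-extremal with respect to $\la_1,\dots,\la_m$. Concretely, I will produce a map $F\in\mathcal O(\bar\DD,\GG_2)$ with $F(\la_j)=\phi(\la_j)$ for all $j$, which by definition excludes extremal solvability. Since $m$ and the nodes are arbitrary, this gives the lemma. The whole difficulty is thus moved to a solvability question for the Pick problem $\la_j\mapsto\phi(\la_j)$ in $\GG_2$.

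To decide solvability I will invoke Agler and Young's solution of the $\GG_2$-Pick problem (see \cite{You}): the data $\la_j\mapsto(s_j,p_j)$ admits a solution in $\mathcal O(\bar\DD,\GG_2)$ as soon as, for every $\omega\in\TT$, the scalar Pick matrix $P_\omega=\big[\tfrac{1-f_\omega(s_i,p_i)\overline{f_\omega(s_j,p_j)}}{1-\la_i\bar\la_j}\big]_{i,j}$ attached to the magic function $f_\omega(s,p)=\frac{2\omega p-s}{2-\omega s}$ is strictly positive definite. So everything reduces to checking $P_\omega>0$ for each $\omega$, now with $(s_j,p_j)=\phi(\la_j)=p(\la_j,h(\la_j))$. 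Note first that $f_\omega\circ\phi\in\mathcal O(\bar\DD,\overline\DD)$: writing $s=\la+h(\la)$, $p=\la h(\la)$, the denominator $2-\omega(\la+h(\la))$ is bounded away from $0$ on $\bar\DD$ because $|h|<1$ there, so $|s|\le 1+\|h\|_{\bar\DD}<2$.

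The heart of the matter is a direct computation of $f_\omega\circ\phi$ on the circle. Expanding $|2-\omega(\zeta+h(\zeta))|^2-|2\omega\zeta h(\zeta)-\zeta-h(\zeta)|^2$ for $\zeta\in\TT$ yields the identity
\[ |f_\omega(\phi(\zeta))|^2=1-\frac{4\,\bigl(1-|h(\zeta)|^2\bigr)\,\bigl(1-\re(\omega\zeta)\bigr)}{\bigl|2-\omega(\zeta+h(\zeta))\bigr|^2},\qquad \zeta\in\TT. \]
The decisive feature -- and the step I expect to be the main obstacle -- is that in this expansion all terms carrying the argument of $h(\zeta)$ cancel, leaving the manifestly nonnegative numerator $4(1-|h(\zeta)|^2)(1-\re(\omega\zeta))$. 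Since $|h(\zeta)|<1$ and $1-\re(\omega\zeta)\ge 0$, vanishing only at the single point $\zeta=\bar\omega$, we obtain $|f_\omega(\phi(\zeta))|<1$ for all but one $\zeta\in\TT$. Hence $f_\omega\circ\phi$ is not a finite Blaschke product, and this holds for \emph{every} $\omega\in\TT$.

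Finally I turn this into positivity of $P_\omega$. The function $g:=f_\omega\circ\phi$ lies in $\mathcal O(\bar\DD,\overline\DD)$ and solves the scalar data $\la_j\mapsto f_\omega(\phi(\la_j))$, so $P_\omega\ge 0$. If $P_\omega$ were singular, the classical Nevanlinna--Pick theorem would force this data to possess a \emph{unique} $\overline\DD$-valued solution, necessarily a finite Blaschke product of degree $\rank P_\omega<m$; but $g$ is a solution that is not inner, contradicting uniqueness. Therefore $P_\omega>0$ for all $\omega\in\TT$, and the Agler--Young criterion produces the desired interpolant $F\in\mathcal O(\bar\DD,\GG_2)$ with $F(\la_j)=\phi(\la_j)$. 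Consequently the data is not extremally solvable and $\phi$ is not a weak $m$-extremal, for any choice of nodes and any $m$.
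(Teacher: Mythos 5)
Your computational core is sound: the boundary identity
\[
|f_\omega(\phi(\zeta))|^2=1-\frac{4\bigl(1-|h(\zeta)|^2\bigr)\bigl(1-\re(\omega\zeta)\bigr)}{\bigl|2-\omega(\zeta+h(\zeta))\bigr|^2},\qquad \zeta\in\TT,
\]
is correct (the terms involving the argument of $h(\zeta)$ do cancel), and your deduction that every Pick matrix $P_\omega$ is \emph{strictly} positive definite, via the uniqueness part of the classical Nevanlinna--Pick theorem, is a genuinely nice argument. The proof nevertheless collapses at its pivotal step: the ``Agler--Young criterion'' you invoke --- that positivity of $P_\omega$ for every $\omega\in\TT$ \emph{suffices} for solvability of the $\GG_2$-Pick problem --- is not a theorem. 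What Agler and Young proved is the \emph{necessity} of these conditions (via the spectral-set/compression argument), plus their sufficiency for $m=2$ nodes (this is the equality of the Carath\'eodory and Lempert distances on $\GG_2$). Their general solvability criterion, the one actually contained in \cite{Agl} and surveyed in \cite{You}, has a quite different, non-effective form: it asserts the existence of auxiliary parameters making a structured matrix positive semi-definite. Whether the one-parameter family $\{P_\omega\}_{\omega\in\TT}$ of necessary conditions is sufficient when $m\geq 3$ was precisely the open core of the spectral Nevanlinna--Pick problem at the time of this paper; the extremal-maps machinery of \cite{Agl-Lyk-You 2013} and of the present paper exists because no such clean test is available. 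So your argument establishes the lemma only for $m=2$, and for $m\geq 3$ it assumes, in effect, a solution of the very problem this circle of papers is attacking.

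There is also a smaller gap: even granting a criterion that produces a solution $\DD\to\bar\GG_2$ from $P_\omega\geq 0$, you still must upgrade it to an interpolant holomorphic on a \emph{neighbourhood} of $\bar\DD$ with values in the \emph{open} domain. This part is fillable --- uniform strict positivity over the compact $\TT$ lets one replace the nodes $\la_j$ by $\la_j/R$ with $R>1$ close to $1$, and a $\bar\GG_2$-valued solution passing through points of $\GG_2$ stays in $\GG_2$ by the maximum principle applied to a plurisubharmonic exhaustion of the hyperconvex domain $\GG_2$ --- but in your text it is asserted, not proved. For contrast, the paper's own proof never touches any solvability criterion: it uses the freedom to replace $h$ by another function of $\mathcal O(\bar\DD,\DD)$ with the same values at the nodes (so that, after factoring out the common zero guaranteed by Rouch\'e's theorem, the function $g$ may be assumed non-Nash), lifts $\phi$ to the Cartan domain $\rR_I$, and pushes the disc through explicit automorphisms to extract an algebraic relation forcing $g$ to be a Nash function --- a contradiction. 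That route is longer, but every ingredient in it is actually available.
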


\begin{proof}
By Rouch\'e's theorem the mappings $\la \mapsto \la$ and $\la \mapsto h(\la)$ have one common zero in $\DD$. Therefore, composing $\phi$ with an automorphism of the symmetrized bidisc and a M\"obius function we may assume that $h(\la)=\la g(\la)$, where $g\in \mathcal O(\DD,\bar \DD)$. We may additionally assume that $g$ is not a Nash function.

We may write
$$
p(\la, h(\la)) = \phi(\la) = \pi \left( \begin{array}{cc} \la \alpha(\la) & \la \beta(\la) \\ \la \beta(\la) & \la \alpha(\la)\end{array} \right) =  \pi \left( \begin{array}{cc} \la \alpha(\la) & \la^2 \beta(\la) \\ \beta(\la) & \la \alpha(\la)\end{array} \right),
$$
$\la\in \DD$,
where $\alpha = \frac{1 + g}{2},$ $\beta =\frac{1-g}{2}$. Note that $\beta(0)\in \DD$ (otherwise $\alpha\equiv 0$, whence $f_1\equiv 0$) and therefore the mapping
$$
\la\mapsto \left( \begin{array}{cc} \la \alpha(\la) & \la^2 \beta(\la) \\ \beta(\la) & \la \alpha(\la)\end{array} \right)
$$
is an $m$-extremal in $\rR_I$.

For $c=\beta(0)\in \DD$, let $\Phi_{\tilde c}$ denote the automorphism of $\rR_I$ given by the formula \eqref{autgg} with $\tilde c=\left(\begin{array}{cc} 0 & 0 \\ c & 0 \end{array}\right)$. Let us compute formula for it
$$\Phi_{\tilde c} \left( \begin{array}{cc} x_{11} & x_{12} \\ x_{21} & x_{22} \end{array} \right)=
\left( \begin{array}{cc} \sqrt{1-|c|^2} \frac{-x_{11}}{ 1- \bar c x_{21}} & \frac{-x_{12} - \bar c \det x}{1 - \bar c x_{21}} \\ \frac{-x_{21} + c}{1 - \bar c x_{21}}& \sqrt{1-|c|^2} \frac{-x_{22}}{ 1- \bar c x_{21}} \end{array} \right),
$$
$x=(x_{ij})\in \rR_I$. Note that
\begin{equation}\label{jedn}\Phi_{\tilde c}\left( \begin{array}{cc} \la x_{11} & \la^2 x_{12} \\  x_{21} & \la x_{22} \end{array} \right)=\left( \begin{array}{cc} \la \Phi_{11}(x)  & \la^2 \Phi_{12}(x) \\ \Phi_{21}(x) & \la \Phi_{22}(x) \end{array} \right),
\end{equation}
$\la\in \DD$, $x\in \rR_I$.

Writing
\begin{equation}\label{gl0}
\Phi_{\tilde c}\left( \begin{array}{cc} \la \alpha(\la) & \la^2\beta(\la) \\ \beta(\la) & \la \alpha(\la) \end{array} \right) = \left( \begin{array}{cc} \la \psi_1(\la) & \la^2 \psi_2(\la) \\  \la \psi_3(\la) & \la \psi_1(\la) \end{array} \right),\quad \la\in \DD,
\end{equation}
we see that
$$\psi : \la \mapsto \left( \begin{array}{cc} \psi_1(\la) & \la\psi_2(\la) \\   \psi_3(\la) & \psi_1(\la) \end{array} \right)
$$
either is $m-1$ extremal in $\rR_I$ or it lies in $\partial \rR_I$. Moreover, the mapping
$$\tilde \psi: \la \mapsto \left( \begin{array}{cc} \psi_1(\la) & \psi_2(\la) \\   \la \psi_3(\la) & \psi_1(\la) \end{array} \right)
$$
lies in $\partial \rR_I$, thanks to the relation \eqref{jedn} and the fact that $\left(\begin{array}{cc} \alpha & \beta \\ \beta &\alpha \end{array}\right)$ lies in $\partial \rR_{II}$. Let us consider two cases

1) Assume first that $\psi$ is an analytic disc in $\rR_I$. Then $a:=\psi_1(0)\in \DD$. Let $\Phi_a$ denote the automorphism of $\rR_I$ given by \eqref{autgg} with $a=a\cdot 1\in \CC^{2\times 2}$. Let us write formula for it:
$$
\Phi_a\left( \begin{array}{cc} x_{11} & x_{12}\\ x_{21} & x_{22} \end{array}\right)=
\left(\begin{array}{cc}
\frac{(a-x_{11})(1 - \bar a x_{22}) - \bar a x_{12}x_{21}}{1-\bar a \tr x + \bar a^2 \det x}& \frac{-x_{12}(1-|a|^2)}{1-\bar a \tr x + \bar a^2 \det x}\\ \frac{-x_{21}(1-|a|^2)}{1-\bar a \tr x + \bar a^2 \det x} & \frac{(a - x_{22})(1 - \bar a x_{11}) - \bar a x_{12}x_{21}}{1-\bar a \tr x + \bar a^2 \det x}
\end{array}
\right),
$$ $x=(x_{ij})\in \rR_I$. Note that
$$\Phi_a\left( \begin{array}{cc} \psi_1(\la) & \la \psi_2(\la) \\ \psi_3(\la) & \psi_1(\la)\end{array}\right)=
\left( \begin{array}{cc} \la\chi_1(\la) & \la \chi_2(\la) \\ \chi_3(\la) & \la \chi_1(\la)\end{array}\right),\quad \la\in \DD,
$$
where $\chi_j\in \mathcal O(\DD)$, is $m-1$ extremal and
$$\la\mapsto \Phi_a\left( \begin{array}{cc} \psi_1(\la) &  \psi_2(\la) \\ \la \psi_3(\la) & \psi_1(\la)\end{array}\right)=
\left( \begin{array}{cc} \la\chi_1(\la) & \chi_2(\la) \\ \la \chi_3(\la) &\la \chi_1(\la)\end{array}\right)
$$
lies in the boundary of $\rR_I$. This, in particular, means that $\chi_2$ is a unimodular constant (put $\la=0$), say $\chi_2 \equiv \omega\in \TT$, whence $\chi_1\equiv 0$. Let us denote $\chi=\chi_3$. This gives
\begin{equation}\label{gl}
\left(\begin{array}{cc} \psi_1(\la) & \la \psi_2(\la) \\ \psi_3(\la) & \psi_1(\la) \end{array}\right)=
\left(\begin{array}{cc} \frac{a - \bar a \omega \la \chi(\la)}{1 - \bar a ^2 \omega \la \chi(\la)}& \frac{-\la \omega (1-|a|^2)}{{1 - \bar a ^2 \omega \la \chi(\la)}} \\ \frac{- \chi(\la) (1- |a|^2)}{1 - \bar a ^2 \omega \la \chi(\la)}&\frac{a - \bar a \omega \la \chi(\la)}{1 - \bar a ^2 \omega \la \chi(\la)} \end{array}\right),\quad \la\in \DD.
\end{equation}
Therefore, making use of \eqref{gl0} and \eqref{gl} we get:
$$\la^2 = \frac{\la^2 \beta(\la)}{\beta(\la)} = \frac{\la^2 \omega (1-|a|^2) - \bar c \la^2 (a^2 - \omega \la \chi(\la))}{\la \chi(\la) (1- |a|^2) + c( 1- \bar a^2 \omega \la \chi(\la))},\quad\la\in \DD.$$ This equality provide us with a contradiction ($\chi$ is not a Nash function, as $g$ is not).

2) Now assume that $\psi$ is an analytic disc in $\partial \rR_I$. Since $\tilde \psi$ lies in $\partial \rR_I$ as well we easily find that $|\psi_2|=|\psi_3|$ on $\DD$. This means that $\psi_3=\omega \psi_2$ for some unimodular $\omega$.

Applying a singular value decomposition theorem we see that there is a unitary matrix $U=(u_{ij})$ and an analytic disc $f\in \mathcal O(\DD, \bar \DD)$ such that $$\psi^\tau(\la) = U \left( \begin{array}{cc} 1 & 0 \\ 0 & f(\la)\end{array} \right)U^t,
\quad \la\in \DD.
$$
In other words
$$
\left( \begin{array}{cc} \psi_1(\la) & \la \psi_2(\la) \\ \psi_3(\la) & \psi_1(\la)\end{array}\right) =
\left( \begin{array}{cc}
u_{11} u_{21} + f(\la)  u_{22} u_{12} & u_{11}^2 + f(\la) u_{12}^2 \\ u_{21}^2 + f(\la) u_{22}^2 &u_{11} u_{21} + f(\la)  u_{22} u_{12}
\end{array}\right).
$$ Thus $\la u_{21}^2 + \la f(\la) u_{22}^2 = \omega u_{11}^2 + \omega f(\la) u_{12}^2$, $\la\in \DD$. Since $f$ is not Nash we immediately get a contradiction.

\end{proof}

\begin{proof}[Proof of Lemma~\ref{ms}]
Seeking a contradiction suppose the contrary i.e. $Z$ is not a Blaschke product. Take pairwise distinct $\la_1,\ldots, \la_m$ in the unit disc such that $f$ is a weak $m$-extremal with respect to $\la_1,\ldots, \la_m$. Then $Z$ is not $m$-extremal for data $\lambda_j \mapsto Z(\lambda_j)$, $j=1,\ldots, m$, therefore there is a holomorphic function $Z_1:\bar \DD \to \DD$ such that $Z_1(\lambda_j)=Z(\lambda_j)$, $j=1,\ldots, m.$ Modifying it we may additionally assume that $Z_1$ is not a Nash function.

Define
$$\phi_1(\lambda):= \Phi_{1}(\lambda(\Phi_{2}(\ldots \la \Phi_{k} \left( \begin{array}{cc} \lambda & 0 \\ 0 & Z_1(\lambda) \end{array}\right)))),\quad \la\in \DD.
$$
Since $\pi\circ \phi_1^\tau$ and $f$ coincide for $\lambda_j$, $j=1, \ldots, m$, we find that $\pi \circ \phi_1^\tau$ is a weak $m$-extremal in $\GG_2$. In particular, $|(\phi_1)_{11}|=|(\phi_1)_{22}|$ on $\TT$ by Proposition~\ref{prop}. Take a function $h$ non-vanishing on $\DD$ and Blaschke products or unimodular constants $b_{1}$, $b_2$ such that $(\phi_1)_{11} = b_1 h$ and $(\phi_1)_{22} = b_2 h$. Let $b= b_1 b_2$. Note that the mapping
\begin{equation}\label{phi2}
\phi_2:\lambda \mapsto \left( \begin{array}{cc} h(\lambda) & (\phi_1)_{12}(\lambda) \\ (\phi_1)_{12}(\lambda) & b(\lambda) h(\lambda) \end{array}\right)
\end{equation}
maps $\DD$ into $\rR_{II}$ (otherwise $\phi_2$ would be, up to an automorphism of $\rR_{II}$ of the form $\left(\begin{array}{cc} 1 & 0 \\ 0 & b(\la) \end{array}\right)$ which contradicts the fact that $\phi_2$ is not Nash) and it is $m$-extremal in $\rR_{II}$, as $\pi\circ \phi_2^\tau = \pi\circ \phi_1^\tau$, extending past $\bar \DD$.

Therefore there is $l\leq m-1$ and there are $\Psi_1,\ldots \Psi_l$ automorphisms of $\rR_{II}$ and a holomorphic mapping $T:\DD\to \DD$ such that
\begin{equation}\label{phi2a}
\phi_2(\la) = \Psi_1(\lambda \Psi_2(\ldots(\lambda \Psi_l\left(\begin{array}{cc} \lambda & 0 \\ 0 & T(\lambda) \end{array}\right)))),\quad \la\in \DD.
\end{equation} Note that $T$ extends holomorphically past $\bar \DD$ and that it is not a Nash function.

Properties \eqref{phi2} and \eqref{phi2a} are crucial in the sequel. Let us define
$$
\psi(\lambda, \nu):= \Psi_1(\lambda \Psi_2(\ldots(\lambda \Psi_l\left(\begin{array}{cc} \lambda & 0 \\ 0 & \nu \end{array}\right)))),\quad \la, \nu\in \DD.
$$
Directly from \eqref{phi2} we get $$\psi_{22}(\lambda, T(\la)) = b(\lambda) \psi_{11}(\lambda, T(\lambda)),\quad \la\in \DD$$ and $$\psi_{12}(\la, T(\la)) = \psi_{21}(\la, T(\la)),\quad \la\in \DD.$$ Since $T$ is not Nash we find that $\psi_{22}(\lambda, \nu) = b(\la) \psi_{11}(\la,\nu)$ and $\psi_{12}(\la, \nu) = \psi_{21}(\la, \nu)$, for $\la, \nu\in \DD$.

Thus:
\begin{equation}\label{11} \Psi_1(\lambda \Psi_2(\ldots(\lambda \Psi_l\left(\begin{array}{cc} \lambda & 0 \\ 0 & \nu \end{array}\right))))= \left( \begin{array}{cc} \psi_{11}(\lambda, \nu) & \psi_{12}(\lambda, \nu)\\ \psi_{12}(\lambda, \nu) & b(\lambda) \psi_{11}(\lambda_1, \nu)\end{array}\right)
\end{equation} for any $\lambda, \nu \in \DD$.

If $\la$ and $\nu$ lie in $\TT$, then the left side of \eqref{11} lies in the Shilov boundary of $\rR_{II}$. Therefore $\left( \begin{array}{cc} \psi_{11}(\lambda, \nu) & \psi_{12}(\lambda, \nu)\\ \psi_{12}(\lambda, \nu) & b(\lambda) \psi_{11}(\lambda_1, \nu)\end{array}\right)$ is a unitary matrix for any $\la, \nu\in \TT$. Thus the following equations are satisfied for $\la, \nu \in \TT$:
\begin{align}\label{e1} |\psi_{11}(\la, \nu)|^2 +& |\psi_{12}(\la, \nu)|^2 = 1 \\\label{e2}
\overline \psi_{11} (\la, \nu) \psi_{12}(\la, \nu) +& b(\la) \psi_{11}(\la, \nu) \overline \psi_{12}(\la, \nu) = 0.
\end{align}

Fix $\lambda_0\in \TT$ and let $\sqrt{b(\lambda_0)}$ denote any square root of $b(\lambda_0)$. It follows from equations \eqref{e1} and \eqref{e2} that $$|\sqrt{b(\lambda_0)} \psi_{11}(\lambda_0, \nu) + \psi_{12} (\lambda_0, \nu)|=1 $$ and
$$|\sqrt{b(\lambda_0)} \psi_{11}(\lambda_0, \nu) - \psi_{12} (\lambda_0, \nu)|=1$$ for any $\nu\in \TT$. Thus there are Blaschke products or unimodular constants $B_1$ and $B_2$ such that
\begin{equation}\label{eq11}\sqrt{b(\lambda_0)} \psi_{11}(\lambda_0, \nu) + \psi_{12} (\lambda_0, \nu)=B_1(\nu),\quad \nu \in \DD,
\end{equation}
and
\begin{equation}\label{eq12}\sqrt{b(\lambda_0)} \psi_{11}(\lambda_0, \nu) - \psi_{12} (\lambda_0, \nu)=B_2(\nu),\quad \nu \in \DD.
\end{equation}
Putting it to \eqref{11} we get
\begin{multline}\label{12} \Psi_1(\lambda_0 \Psi_2(\ldots(\lambda_0 \Psi_l\left(\begin{array}{cc} \lambda_0 & 0 \\ 0 & \nu \end{array}\right))))=\\ \left( \begin{array}{cc}\frac{1}{ \sqrt{b(\la_0)}} \frac{B_1(\nu) + B_2(\nu)}{2} & \frac{B_1(\nu) - B_2(\nu)}{2}\\  \frac{B_1(\nu) - B_2(\nu)}{2} & \sqrt{b(\la_0)}\frac{B_1(\nu) + B_2(\nu)}{2}\end{array}\right)
\end{multline}
for $\nu\in \DD$. Clearly the matrix in the left side of $\eqref{12}$ lies in the topological boundary of $\rR_{II}$ for any $\nu\in \DD$, as $\la_0\in \TT$, so its operator norm is equal to $1$. On the other hand for any $\nu$ in the unit disc the norm of the matrix in the right side of \eqref{12} is equal to $\max(|B_1(\nu)|, |B_2(\nu)|)$. In particular, it is less then $1$ if $\nu$ lies in $\DD$ provided that both $B_1$ and $B_2$ are not unimodular constants.

Therefore at least one of $B_i$ is constant. Putting $\alpha_1= \frac{\partial \psi_{11}}{\partial \nu}$ and $\alpha_2= \frac{\partial \psi_{12}}{\partial \nu}$ and differentiating the equalities \eqref{eq11} and \eqref{eq12} we easily get that the equality $b(\lambda_0) \alpha_1^2(\lambda_0, \nu) = \alpha_2^2(\lambda_0, \nu)$ holds for any $\nu\in \DD$. Therefore we have shown that
\begin{equation}
b(\la) \alpha_1^2(\la, \nu) = \alpha_2^2(\la, \nu),\quad \la,\nu\in \DD.
\end{equation}

Note that if $\alpha_j$ vanishes identically for some $j=1,2$, then $\psi_{11}(\la, \nu)$ and $\psi_{12}(\la, \nu)$ will not be depended on $\nu$ which is impossible. Therefore we easily infer that there is a Blaschke product or unimodular constant $\tilde b$ such that $b=\tilde b^2$.

Define $\varphi_3(\lambda) = \left( \begin{array}{cc} \tilde b(\lambda) (\phi_1)_{11}(\lambda) & (\phi_1)_{12}(\lambda) \\ (\phi_1)_{12}(\lambda) & \tilde b(\lambda) (\phi_1)_{11}(\lambda) \end{array}\right),$ $\lambda\in \DD$. Note that $\pi\circ \varphi_3^\tau$ is a weak $m$-extremal in $\GG_2$ (it is equal to $\pi\circ \phi_2^\tau$). Moreover, for any $\lambda\in \TT$ the point $\phi_3(\lambda)$ does not lie in the Shilov boundary of $\rR_I$ as $\phi_2(\lambda)$ does not (this follows from the fact that $T(\DD)\subset \subset \DD$) and $\phi_3$ is not Nash.

Using properties of $\phi_3$ we shall construct an $m$-extremal mapping in $\GG_2$ of the form $\pi\circ \phi^{\tau}$, where $\phi$ is an analytic disc in $\rR_I$ extending past $\bar \DD$ such that $\phi$ and $\phi^\tau$ are symmetric, $\pi\circ \phi^\tau(\bar \DD)$ does not touch the Shilov boundary of $\GG_2$, $\phi$ is not Nash and
\begin{enumerate}
\item[(i)] either $\lambda\mapsto \pi(\phi(\la)^\tau)$ is a weak $m$-extremal in $\GG_2$ omitting $\Sigma$, or
\item[(ii)] $\phi$ is a $2$-extremal in $\rR_{I}$ passing through the origin.
\end{enumerate}

To get such a mapping observe that we may assume that $(\phi_1)_{11}$ does not vanish on $\DD$ (otherwise we may include its zeros to $\tilde b$).

If $\tilde b$ is a unimodular constant, we just take $\phi= \phi_3$.

Otherwise, let $\la_1,\ldots, \la_l$ be zeros of $\tilde b$ counted with the multiplicity.

If $l=1$, then we may of course assume that $b(\la)=\la$, $\la\in \DD$. Composing $\phi_3^\tau$ with automorphism $\Phi_a$ of $\rR_{I}$ of the form \eqref{autgg} with a scalar $a$ (recall that such an automorphism induces an automorphism of $\GG_2$) we may assume additionally that $(\phi_1)_{12}(0)=0$. Then we have two possibilities:
\begin{itemize}
\item $\la\mapsto \frac 1\la \phi_3(\la)$ is an analytic disc in $\rR_I$ and then $\phi(\la) = \frac 1\la \phi_3(\la),$ $\la\in \DD$, satisfies (i), or
\item $\la\mapsto \frac 1\la \phi_3(\la)$ lands in the topological boundary of $\rR_I$ and then $\phi_3$ is $2$-extremal, so $\phi=\phi_3$ satisfies (ii).
\end{itemize}

Note that the case when $l>1$ may be reduced to these two possibilities as well. Let $\Phi_{\la_j}$ denote an automorphisms of $\rR_I$ such that $\Phi_{\la_j}(0) = \la_j$ and $\Phi_{\la_j}(\la_j) =0$, $j=1,\ldots, l.$ Since every $\Phi_{\la_j}$ induces an automorphism of $\GG_2$ (see \eqref{11}) we see that applying $l$-times the procedure described above we will obtain in this way a weak $m$-extremal in $\GG_2$ satisfying (i) or (ii).

Note that the situation (i) is impossible. Actually, otherwise one may lift $\phi$ to an $m$-extremal in the bidisc, call it $(a_1, a_2)$. Losing no generality we may assume that $a_1$ is a Blaschke product of degree at most $m$. Then $a_2(\DD)$ is relatively compact in $\DD$, which implies that the equation $a_1=a_2$ has one solution in $\DD$; a contradiction.

If (ii) holds then $\pi\circ\phi^\tau$ is of the form $$\phi:\la \mapsto U \left( \begin{array}{cc} \la & 0 \\ 0 & T_1 (\la)\end{array} \right) U^t $$ is a weak $m$-extremal in $\GG_2$ (actually it is $m-l$-extremal) such that $\phi$ and $\phi^{\tau}$ are symmetric. Simply computations (remember about the symmetry of $\phi^\tau$) lead to a formula $\pi\circ \phi^\tau (\la)= p(\la , T_1(\la))$, $\la \in \DD$. Using Lemma~\ref{lembegin} one can derive a contradiction.

\end{proof}

\begin{proof}[Proof of Theorem~\ref{thrational}]
Rationality of weak $m$-extremals is a consequence of Lemma~\ref{ms}. The fact that they are inner follows immediately from the description of the Shilov boundaries of the symmetrized bidisc and the classical Cartan domains.
\end{proof}

\begin{remark}\label{deg}
Note that we are able to estimate the degree of any weak $m$-extremal in the symmetrized bidisc. To do it more precisely one may repeat the argument used in the proof of Lemma~\ref{ms} and show that for any weak $m$-extremal $f$ in $\GG_2$ there are $k\leq m-1$, pairwise distinct points $\la_1,\ldots,\la_{k-1}$ in $\DD$, automorphisms $\Phi_1,\ldots \Phi_{k}$ of $\rR_I$ and a Blaschke product $b$ of degree $m-k$ such that
$$
f(\la) = \pi(\Phi_1(m_1(\la)\Phi_2(\ldots m_{k-1}(\la) \Phi_{k} \left(\begin{array}{cc} \la & 0\\ 0 & b(\la)\end{array}
\right)))),\quad \la \in \DD,
$$ where $m_j(\la) = \frac{\la_j - \la}{1 - \bar \la_j \la},$ $\la \in \DD$, $j=1,\ldots, k.$
Note that it is easier to estimate the degree if $m$-extremal mappings in $\GG_2$.
\end{remark}

Observe that estimating the degree of a weak $m$-extremal in $\GG_2$ is simple if it omits the royal variety of $\GG_2$:
\begin{remark}
Let $f:\DD\to \GG_2$ be a weak $m$-extremal in $\GG_2$ omitting $\Sigma$. Then we may lift it of an $m$-extremal in $\DD^2$, i.e. there is $a_1$ - a Blaschke product of degree at most $m-1$ and a function $a_2\in \mathcal O(\DD, \DD)$ such that $f=p(a_1, a_2).$ Note that $a_2$ is a Blaschke product of degree at most $m-1$. Otherwise one can find a function $b:\bar \DD\to \DD$ such that $b$ is not Nash. Then $p(a_1,b)$ is a weak $m$-extremal in $\GG_2$ intersecting $\Sigma$ (Rouch\'e's theorem). This gives a contradiction, as we have shown that all weak $m$-extremals in $\GG_2$ intersecting $\Sigma$ are rational.

\end{remark}

\begin{corollary}\label{cor} Let $f:\DD\to \GG_2$ be a weak $m$-extremal mapping. Then $f_2$ is a Blaschke product.
\end{corollary}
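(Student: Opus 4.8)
The plan is to obtain the corollary as an immediate consequence of Theorem~\ref{thrational}. Writing $f=(f_1,f_2)$, where $f_2$ is the determinant component of $f$ under $\pi$, Theorem~\ref{thrational} tells us that $f$ is rational and $\GG_2$-inner; in particular $f_2$ is a rational function, holomorphic on $\DD$. First I would note that $f_2$ takes values in $\DD$: the defining inequality $|s-\bar s p|+|p|^2<1$ of $\GG_2$ forces $|p|<1$, so $|f_2|<1$ on all of $\DD$, and in particular $f_2$ is nonconstant (a unimodular constant value is excluded).

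Next I would read off the behaviour of $f_2$ on the boundary. Recall that $\partial_s\GG_2=\{(\la_1+\la_2,\la_1\la_2):\la_1,\la_2\in\TT\}$, so the second coordinate of every Shilov boundary point has modulus one. Since $f$ is $\GG_2$-inner, $f^*(\zeta)\in\partial_s\GG_2$ for almost all $\zeta\in\TT$, and taking second coordinates gives $|f_2^*(\zeta)|=1$ for almost every $\zeta\in\TT$.

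It then remains to invoke the classical fact that a rational function holomorphic and bounded by one on $\DD$ with unimodular boundary values is a finite Blaschke product. Because $f_2$ is rational and bounded on $\DD$ it has no poles on $\overline\DD$, so the a.e.\ equality $|f_2^*|=1$ holds at every point of $\TT$; the Schwarz reflection principle then yields the functional equation $f_2(z)\,\overline{f_2(1/\bar z)}=1$, which shows that $f_2$ extends to a rational self-map of the Riemann sphere whose zeros in $\overline\DD$ all lie in $\DD$ and whose poles all lie in $\{|z|>1\}$. Pairing each zero in $\DD$ with the corresponding Blaschke factor exhibits $f_2$ as a unimodular constant times a finite Blaschke product, the degree being the number of zeros of $f_2$ in $\DD$.

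Since this argument only uses that $f$ is rational and inner, it applies uniformly to both alternatives of Lemma~\ref{lem1}, including the degenerate case $f=(0,f_2)$. The substantive content is thus entirely contained in Theorem~\ref{thrational}; the only step requiring any care is the last, classical one, and there the single thing to verify---that $f_2$ has no poles on $\overline\DD$---is immediate from boundedness. Hence no genuine obstacle remains once Theorem~\ref{thrational} is available.
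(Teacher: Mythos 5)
Your proof is correct, but it takes a genuinely different (downstream) route from the paper's. The paper does not invoke Theorem~\ref{thrational} at all: it takes the lift $\phi\in\mathcal O(\DD,\rR_{II})$ with $f=\pi\circ\phi^\tau$ from Lemma~\ref{lem1} and applies Lemma~\ref{ms}, which exhibits $\phi$ explicitly as a composition of automorphisms of $\rR_{II}$ with the disc $\la\mapsto\operatorname{diag}(\la,Z(\la))$, $Z$ a Blaschke product; consequently $\phi$ is rational, extends past $\bar\DD$, and sends $\TT$ into the symmetric unitaries, so $f_2=\det\phi^\tau=-\det\phi$ is a rational function of modulus one on $\TT$, hence a Blaschke product (the paper leaves this last classical step implicit). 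You instead treat Theorem~\ref{thrational} as a black box: rationality plus $\GG_2$-innerness, combined with the observation that the second coordinate of every point of $\partial_s\GG_2=\{(\la_1+\la_2,\la_1\la_2):\la_1,\la_2\in\TT\}$ is unimodular, shows $f_2$ is a rational self-map of $\DD$ with unimodular boundary values a.e., and you then execute in full the classical argument (no poles on $\bar\DD$, reflection $f_2(z)\,\overline{f_2(1/\bar z)}=1$, zero/pole pairing) that rational inner functions are finite Blaschke products. Both arguments ultimately rest on the same structural result, since Lemma~\ref{ms} is what proves Theorem~\ref{thrational}; what yours buys is modularity and uniformity (it needs no lift, handles the degenerate case $f=(0,f_2)$ with no separate discussion, and makes the implicit classical fact explicit), while the paper's version keeps hold of the lift $\phi$, which is what ties the degree of $f_2$ to the data $k$, $Z$ appearing in Remark~\ref{deg}.
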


\begin{proof} There is an analytic disc in $\rR_{II}$ such that $f=\pi\circ \phi^{\tau}$. It follows from Lemma~\ref{ms} that $\det \phi(\la)$ is a unimodular constant for any $\la\in \TT$.
\end{proof}

{\bf Question.} Recall the following question stated in \cite{Kos-Zwo 2014}: \emph{are weak $m$-extremals in the symmetrized bidisc $m$-extremals}? If the answer is positive, the next question is very natural: \emph{are extremals in the symmetrized bidisc complex geodesics}? For a definition of a complex geodesic see \cite{Kos-Zwo 2014}. Note that it follows from the results obtained in the paper that for any $m$ there is $k$ such that a weak $m$-extremal in $\GG_2$ is a $k$-complex geodesic (see Corollary~\ref{cor} and Remark~\ref{deg}).

\bigskip

{\bf Acknowledgments.} I would like to thank Zinaida Lykova for reading a draft version of the manuscript and many comments that improved the shape of the paper. I am also grateful to Thomas Ransford and Sylwester Zaj\c{a}c for interesting discussions and bringing my attention to some important papers.

\end{document}